\crefname{equation}{}{}
\DeclareSymbolFont{symbolsC}{U}{pxsyc}{m}{n}
\DeclareMathSymbol{\medcircle}{\mathbin}{symbolsC}{7}
\crefname{algocf}{Algorithm}{Algorithms}
\crefname{equation}{}{} 
\colorlet{refkey}{orange!20}
\colorlet{labelkey}{blue!30}
\crefname{algocf}{Algorithm}{Algorithms}
\numberwithin{equation}{section}
\newtheorem{theorem}{Theorem}[section]
\newtheorem{proposition}[theorem]{Proposition}
\newtheorem{lemma}[theorem]{Lemma}
\crefname{claim}{Claim}{Claims}
\newtheorem{conjecture}[theorem]{Conjecture}
\newtheorem*{question*}{Question}
\theoremstyle{definition}
\newtheorem*{definition*}{Definition}
\theoremstyle{remark}
\newtheorem*{remark}{Remark}
\newlist{enumthm}{enumerate}{1}
\setlist[enumthm]{label=\textup{(\roman*)},ref=\thethm(\roman*)}
\Crefname{enumthmi}{Theorem}{Theorems}
\newcommand{\mb}{\mathbb}
\newcommand{\mbm}{\mathbbm}
\newcommand{\mc}{\mathcal}
\newcommand{\mf}{\mathfrak}
\newcommand{\on}{\operatorname}
\newcommand{\wh}{\widehat}
\newcommand{\wt}{\widetilde}
\let\originalleft\left
\let\originalright\right
\renewcommand{\left}{\mathopen{}\mathclose\bgroup\originalleft}
\renewcommand{\right}{\aftergroup\egroup\originalright}
\newif\ifpublic
\newcommand{\ignore}[1]{}
\title{Hitting time mixing for the random transposition walk}
\author[A1]{Vishesh Jain}
\address{Department of Mathematics, Statistics, and Computer Science, University of Illinois Chicago, Chicago, IL, 60607 USA}
\email{visheshj@uic.edu}
\author[A2]{Mehtaab Sawhney}
\address{Department of Mathematics, Columbia University, New York, NY 10027}
\email{m.sawhney@columbia.edu}
\begin{document}

\maketitle
\begin{abstract}
Consider shuffling a deck of $n$ cards, labeled $1$ through $n$, as follows: at each time step, pick one card uniformly with your right hand and another card, independently and uniformly with your left hand; then swap the cards. How long does it take until the deck is close to random? 

Diaconis and Shahshahani showed that this process undergoes cutoff in total variation distance at time $t = \lfloor n\log{n}/2 \rfloor$. Confirming a conjecture of N.~Berestycki, we prove the definitive ``hitting time'' version of this result: let $\tau$ denote the first time at which all cards have been touched. The total variation distance between the stopped distribution at $\tau$ and the uniform distribution on permutations is $o_n(1)$; this is best possible, since at time $\tau-1$, the total variation distance is at least $(1+o_n(1))e^{-1}$.   
\end{abstract}

\section{Introduction}\label{sec:introduction}
The object of study in this paper is the random transposition walk on the symmetric group $\mf{S}_n$. Informally, viewing elements of $\mf{S}_n$ as possible permutations of $n$ cards, this is the Markov chain with the following transitions: at each time step, pick one card uniformly with your right hand and another card, independently and uniformly with your left hand (note that with probability $1/n$, the cards will be the same); then swap the cards. From this point of view, perhaps the most natural question to ask is the following: what is the ``first time'' at which the cards are (approximately) shuffled?

Formally, let $P_n$ be the probability measure on $\mf{S}_n$ defined by 
\[P_n = \begin{cases}
\on{Id} & \text{ with probability }1/n\\
(ij) & \text{ with probability }2/n^2\text{ for }i<j.
\end{cases}\]
Then, the process described above can be modeled by the discrete time Markov chain given by $X_{t+1} = X_t\cdot \tau$ where $\tau \sim P_n$ (with $X_0 = \on{id}_n$, the identity permutation). In particular, $X_t \sim P_n^{\ast t}$, the $t$-fold convolution of $P_n$. 

Let $U_{\mf{S}_n}$ denote the uniform measure on $\mf{S}_n$. Since the random transposition walk is ergodic and reversible with respect to $U_{\mf{S}_n}$, the distribution of $X_t$ converges in total variation distance to $U_{\mf{S}_n}$ as $t \to \infty$; recall that for two (probability) measures $\mu$ and $\nu$ on a common ground set $E$, the total variation distance (or TV distance) and $L^1$-distance are defined by
\[d_{\on{TV}}(\mu,\nu) = \frac{d_{1}(\mu,\nu)}{2} = \frac{1}{2}\sum_{x\in E}|\mu(x) - \nu(x)|.\]

Returning to the question at the end of the first paragraph, a natural lower bound is the first time at which the random transposition walk has ``touched'' all the cards; formally, using the notation above, consider the random (stopping) time 
\[\tau := \min_{t\ge 1}\mbm{1}\bigg[\Big(\min_{k\in [n]}\sum_{\ell = 1}^{t}\mbm{1}[i_\ell = k \vee j_\ell = k]\Big)\ge 1\bigg]. \]
Observe that at time $\tau - 1$, the cards are certainly not well-shuffled, even in an approximate sense -- indeed, $X_{\tau-1}$ is supported entirely on permutations with at least one fixed point, whereas it is classical that a random permutation drawn from $\mf{S}_n$ has no fixed points with probability $(1+o_n(1))e^{-1}$. Moreover, since $X_{\tau}$ is not distributed exactly uniformly on $\mf{S}_n$ (already for $n=2$, a direct computation shows that $X_{\tau}(\on{id}_2) = 1/6$), the best scenario one can hope for is that $X_{\tau}$ is distributed asymptotically uniformly (in the limit $n \to \infty$); in fact, this has been conjectured by Nathana\"{e}l Berestycki (see \cite[Conjecture~1.2]{Tey20}). Our main result confirms this conjecture. 
\begin{theorem}\label{thm:main}
With notation as above, 
\[d_{\on{TV}}(X_{\tau},U_{\mf{S}_n})\leq \exp\big(-(\log n)^{1/2+o_n(1)}\big).\]
\end{theorem}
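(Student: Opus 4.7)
My plan combines Fourier analysis on $\mathfrak{S}_n$ with the strong Markov property and a quantitative Diaconis--Shahshahani estimate. By Plancherel's theorem on $\mathfrak{S}_n$,
\[
4\,d_{\on{TV}}(X_\tau, U_{\mathfrak{S}_n})^2 \;\leq\; \sum_{\lambda \vdash n,\, \lambda \neq (n)} d_\lambda \,\bigl\|\mathbb{E}[\rho_\lambda(X_\tau)]\bigr\|_{\mr{HS}}^2,
\]
where $\rho_\lambda$ is the irreducible representation of $\mathfrak{S}_n$ attached to the partition $\lambda$, with dimension $d_\lambda$. Since $P_n$ is a class function, Schur's lemma gives $\mathbb{E}[\rho_\lambda(X_t)] = r_\lambda^t\,I$, where $r_\lambda = \tfrac{1}{n} + \tfrac{n-1}{n}\cdot \tfrac{\chi_\lambda((1,2))}{d_\lambda}$. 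When $r_\lambda \neq 0$, the matrix process $\rho_\lambda(X_t)\,r_\lambda^{-t}$ is a martingale, and optional stopping at $\tau$ yields the identity $\mathbb{E}[\rho_\lambda(X_\tau)\,r_\lambda^{-\tau}] = I$.

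I would introduce a deterministic buffer time $T = \lfloor n\log n /2\rfloor + \lceil A n\rceil$ with $A = A(n) = \Theta((\log n)^{1/2})$ chosen so that (i) a sharp quantitative refinement of the Diaconis--Shahshahani cutoff gives $d_{\on{TV}}(X_T, U_{\mathfrak{S}_n}) \leq \exp(-(\log n)^{1/2+o_n(1)})$, and (ii) standard coupon-collector tail estimates for $\tau$ give $\mathbb{P}(\tau > T) \leq \exp(-(\log n)^{1/2+o_n(1)})$. On the event $\{\tau \leq T\}$, the strong Markov property gives $X_T = X_\tau \cdot W$, where $W$ is, conditionally on $\mathcal{F}_\tau$, an independent random transposition walk of length $T - \tau$. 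Hence for each $\lambda$ with $r_\lambda \neq 0$ and each value $\tau = t$,
\[
\mathbb{E}\bigl[\rho_\lambda(X_\tau)\,\bigm|\,\tau = t\bigr] \;=\; r_\lambda^{-(T-t)} \cdot \mathbb{E}\bigl[\rho_\lambda(X_T)\,\bigm|\,\tau = t\bigr].
\]

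The Plancherel sum is then split by partition size. For \emph{small} partitions, those with $n - \lambda_1$ below a carefully chosen threshold so that $d_\lambda$ is polynomial in $n$ and $r_\lambda$ is close to $1$, the sharp concentration of $\tau$ around $n\log n/2$ together with the optional-stopping identity let us approximate $\mathbb{E}[\rho_\lambda(X_\tau)]$ by $\mathbb{E}[r_\lambda^\tau]\,I$ with error controlled by the variance of $\tau$; this yields an acceptable contribution to the sum. For \emph{large} partitions, the quantitative cutoff bound at time $T$ is extremely small, and we transfer it to $X_\tau$ via the conditional identity displayed above.

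The main obstacle is the inversion step for large $\lambda$: typically $|r_\lambda| = O(1/n)$ for such partitions, making the factor $r_\lambda^{-(T-t)}$ astronomically large, so the unconditional $L^2$ bound on $\mathbb{E}[\rho_\lambda(X_T)]$ is insufficient after division. The plan to handle this is to exploit extra structure of the conditional distribution of $X_T$ given $\tau$, in particular the Schramm / Berestycki--Durrett coupling of the random transposition walk with a random graph process whose connected components govern the cycle structure of $X_t$. Concretely, one would show, using the Murnaghan--Nakayama rule applied to the typical cycle type of $X_T$, that $\mathbb{E}[\rho_\lambda(X_T)\,|\,\tau = t]$ is small enough to absorb the blow-up from $r_\lambda^{-(T-t)}$. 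Effecting this compensation uniformly across the large-$\lambda$ regime is, I expect, the most delicate step of the argument.
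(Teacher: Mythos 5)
Your proposal takes a Fourier-analytic, optional-stopping route that is genuinely different from the paper's argument, which never inverts Fourier coefficients: the paper bootstraps the distributional approximation of \cref{thm:distrib} through an inclusion--exclusion/self-reducibility step (\cref{lem:uniform-non-fix}) and then runs a Broder-style marking scheme started from the walk at a time $t^*$ slightly below cutoff. Unfortunately your plan has two concrete gaps, and the first is fatal as written.

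For the small-partition regime, the approximation $\mathbb{E}[\rho_\lambda(X_\tau)] \approx \mathbb{E}[r_\lambda^\tau]\,I$ is simply wrong. Take the standard representation $\lambda=(n-1,1)$, where $\chi_\lambda(\sigma) = \on{Fix}(\sigma)-1$ and $r_\lambda = 1-2/n$. The hitting time $\tau$ is not sharply concentrated: it has Gumbel fluctuations on scale $\Theta(n)$, so $r_\lambda^\tau$ fluctuates multiplicatively by $\Theta(1)$ and one computes $\mathbb{E}[r_\lambda^\tau]\asymp 1/n$. Feeding this back through Schur's lemma would predict $\mathbb{E}[\on{Fix}(X_\tau)]\approx 2$, whereas the very content of the theorem forces $\mathbb{E}[\on{Fix}(X_\tau)]\approx 1$. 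The $\Theta(1)$ discrepancy is not an error term controlled by the variance of $\tau$; it is the negative correlation between $\tau$ and $\on{Fix}(X_\tau)$, which is precisely the mechanism that produces mixing at time $\tau$ and which your approximation discards. Moreover the optional-stopping identity $\mathbb{E}[\rho_\lambda(X_\tau)\,r_\lambda^{-\tau}]=I$ that you invoke is not even valid for this $\lambda$: since $\mathbb{P}(\tau>t)\sim n\,e^{-2t/n}$ while $r_\lambda^{-t}=\exp(2t/n + 2t/n^2+\cdots)$, one has $\mathbb{E}[r_\lambda^{-\tau}]=\infty$, and the martingale $\rho_\lambda(X_t)r_\lambda^{-t}$ is not uniformly integrable up to $\tau$.

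For the large-partition regime, you yourself identify the inversion by $r_\lambda^{-(T-\tau)}$ as ``the most delicate step'' and offer only the vague suggestion of applying Murnaghan--Nakayama to the conditional cycle type of $X_T$ given $\tau$. For these $\lambda$ one can have $|r_\lambda|=O(1/n)$, making $r_\lambda^{-(T-\tau)}$ superpolynomial for fluctuations of $\tau$ of order $n$, and the unconditional $L^2$ cutoff bound on $\mathbb{E}[\rho_\lambda(X_T)]$ does not survive the division because you in fact need tight control of $\mathbb{E}[\rho_\lambda(X_T)\mid\tau=t]$ uniformly in $t$ and $\lambda$. There is no quantitative plan for this, and without a new idea the argument does not close. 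This is exactly the difficulty that the paper's physical-space inclusion--exclusion approach is designed to sidestep.
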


\subsection{Background and additional results} There is a rich and extensive literature surrounding the random transposition walk, which we only briefly summarize here; we refer the reader to \cite[Section~1.3]{BS19} for a more detailed account of the early history of this problem. In a pioneering work \cite{DS81} from 1981, Diaconis and Shahshahani \cite{DS81} used techniques from non-commutative Fourier analysis/representation theory to show that the random transposition walk undergoes cutoff at $n\log n/2$ with cutoff window $O(n)$,~i.e.
\[d_{\on{TV}}(P_n^{\ast \lfloor n\log{n}/2 + cn \rfloor}, U_{\mf{S}_n}) \xrightarrow[c \to \infty]{} 0 \quad \text{ and } \quad d_{\on{TV}}(P_n^{\ast \lfloor n\log{n}/2 - cn \rfloor}, U_{\mf{S}_n}) \xrightarrow[c \to -\infty]{n \to \infty} 1.\]
 To use terminology common in probabilistic combinatorics, whereas this result determines the ``sharp threshold'' for TV-mixing, \cref{thm:main} establishes the ``hitting time'' for TV-mixing; this is typically a much more challenging task since it involves a detailed understanding of the process in the threshold window. We note that the result of \cite{DS81} has been generalized from transpositions to $k$-cycles for fixed $k$ by Berestycki, Schramm and Zeitouni \cite{BSZ11}, and even further to conjugacy classes with support $o(n)$ by Berestycki and  {\c{S}}eng{\"u}l \cite{BS19}, both using probabilistic arguments. See also work of Saloff-Coste and Z\'uniga \cite{SZ08} for a refinement of \cite{DS81} for the $L^2$-cutoff. 
 
 Relevant to the present paper is a recent work of Teyssier \cite{Tey20} which established the cutoff profile for the random transposition walk: for fixed $c \in \mb{R}$,
\[d_{\on{TV}}(P_n^{\ast \lfloor n\log{n}/2 + cn \rfloor}, U_{\mf{S}_n}) \xrightarrow[n \to \infty]{} d_{\on{TV}}(\on{Pois}(1+e^{-2c}), \on{Pois}(1)),\]
where $\on{Pois}(a)$ denotes the Poisson distribution with mean $a$ the same cutoff profile appears for several other random walks (see,~e.g.,~\cite{NO22,N24}). As a key intermediate step in our work, we establish the following strengthening of Teyssier's result, which provides a distributional approximation for $X_t$ near cutoff (instead of only finding the distance to uniformity). In order to state this result, we need some notation. Let
\[t = \bigg\lfloor \frac{n\log n}{2}\bigg\rfloor + t' \]
with $t'\in \mb{Z}$. We define 
\[\gamma_t = e^{-2t'/n}.\]
Let $\nu_t$ denote the measure on $\mf{S}_n$ defined by the following sampling process: first, sample $M_t \in \{0,1,\dots,n\}$ according to the distribution $\mb{P}[M_t = x] = \mb{P}[\on{Pois}(\gamma_t) = x]/\mb{P}[\on{Pois}(\gamma_t) \le n]$, then sample a uniformly random subset $S_t$ of size $M_t$ in $[n]$ (the truncation $M_t\le n$ ensures that $S_t$ is well-defined), and finally, sample a uniformly random element of $\mf{S}_{[n]\setminus S_t}$ and view it as an element of $\mf{S}_n$ by fixing all of the elements in $S_t$. 
\begin{theorem}\label{thm:distrib}
With notation as above, if $|t'|\le n(\log\log n/4 - \log\log\log n)$, then 
\[d_{\on{TV}}(X_t,\nu_t)\leq n^{-1+o(1)}.\]
\end{theorem}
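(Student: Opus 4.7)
The plan is to decompose $X_t$ through the random set of untouched cards, matching it to the $(M_t, S_t)$ ingredient of $\nu_t$, and then to establish approximate uniformity of the permutation on the complement. Let $U_t \subseteq [n]$ denote the (random) set of cards never touched up to time $t$; this set is automatically fixed by $X_t$, and its role in $X_t$ will mirror the role of $S_t$ in $\nu_t$.

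\textbf{Step 1 (matching the untouched set).} For each $k \in [n]$, $\mathbb{P}[k \in U_t] = (1 - 2/n + 1/n^2)^t = (1 + O(1/n))\gamma_t/n$, and pairwise correlations are $O(1/n^2)$. A Chen-Stein Poisson approximation then yields $d_{\on{TV}}(|U_t|, \on{Pois}(\gamma_t)) \leq n^{-1+o(1)}$, with truncation at $n$ contributing negligibly in our range of $\gamma_t$. By the conjugation invariance of the walk, $U_t$ given $|U_t|$ is uniform over subsets of $[n]$ of the appropriate size, so the joint law of $(|U_t|, U_t)$ matches that of $(M_t, S_t)$ within $n^{-1+o(1)}$ in TV.

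\textbf{Step 2 (conditional uniformity on the complement).} By Step 1 and the triangle inequality, it suffices to show that for each fixed $S \subseteq [n]$ with $|S| \leq (\log n)^{1/2+o(1)}$, writing $T = [n]\setminus S$,
\[ d_{\on{TV}}\big(\on{Law}(X_t \mid U_t = S),\; U_{\mf{S}_T}\big) \leq n^{-1+o(1)}. \]
Conditioning on $U_t = S$ is equivalent to running the random transposition walk $Y_t$ on $T$ and further conditioning on the event $A_S$ that every element of $T$ is touched at least once. Because both $Y_t$ and $A_S$ are invariant under conjugation by $\mf{S}_T$, Bayes' rule yields, for each $\pi \in \mf{S}_T$,
\[ \mathbb{P}[Y_t = \pi \mid A_S] = \frac{h(f(\pi))\,\mathbb{P}[Y_t = \pi]}{\mathbb{P}[A_S]}, \]
where $f(\pi)$ denotes the number of fixed points of $\pi$ and $h(f) := \mathbb{P}[A_S \mid Y_t = \pi, f(\pi) = f]$ depends only on $f$. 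I would prove that the numerator is approximately constant in $\pi$ via two ingredients: a pointwise strengthening of Teyssier's cutoff profile,
\[ |T|!\cdot\mathbb{P}[Y_t = \pi] = (1 + n^{-1+o(1)})\,e^{-\gamma_t}(1+\gamma_t)^{f(\pi)} \quad \text{uniformly in } \pi, \]
combined with an inclusion-exclusion/near-independence estimate giving $h(f) \approx (1+\gamma_t)^{-f}$ up to the same error (motivated by the Bayes computation $\mathbb{P}[a \in U_t \mid a \in \on{Fix}(Y_t)] \approx \gamma_t/(1+\gamma_t)$ on a single card). Their product is then $\approx e^{-\gamma_t}/|T|!$, independent of $\pi$, from which conditional uniformity follows.

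\textbf{Main obstacle.} The harder of the two ingredients is the pointwise refinement of Teyssier's profile. His Fourier-analytic argument delivers a TV bound driven by the trivial and hook irreducible representations of $\mf{S}_T$, but upgrading this to a multiplicative $n^{-1+o(1)}$ pointwise estimate uniformly over $\pi$ requires summing character contributions from \emph{all} Young diagrams with quantitative control. For $\gamma_t$ as large as $(\log n)^{1/2+o(1)}$, numerous non-trivial representations contribute and their eigenvalues interact delicately with the character values, making this the technical core of the proof. The near-independence estimate for the events $\{a \in U_t\}$ as $a$ ranges over the fixed points of $\pi$, which justifies the claimed form of $h(f)$, is a secondary challenge requiring joint control of the trajectory of $Y_t$ and its terminal state.
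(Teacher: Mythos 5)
Your plan inverts the logical structure of the paper in a way that makes the problem strictly harder rather than reducing it. The pointwise estimate you flag as the ``main obstacle'' in Step 2 --- that $|T|!\cdot\mathbb{P}[Y_t=\pi]=(1+n^{-1+o(1)})e^{-\gamma_t}(1+\gamma_t)^{f(\pi)}$ \emph{uniformly} in $\pi$ --- is an $L^\infty$ statement that is strictly stronger than the $L^1$ (total-variation) bound in \cref{thm:distrib}, and you offer no route to proving it. The paper avoids this entirely: it passes from $L^1$ to $L^2$ by Cauchy--Schwarz, applies Plancherel, and compares Fourier coefficients of $P_n^{\ast t}$ (from \cite{DS81}) with those of a truncated $\nu_t$, which it computes \emph{exactly} via Frobenius reciprocity and Young's rule (\cref{lem:key-comp}). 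The $L^2$ quantities collapse to clean sums over partitions $\lambda$ indexed by $x=n-\lambda_1$, whereas a uniform $L^\infty$ bound would reintroduce precisely the $L^1$ character sums over all Young diagrams that the paper is explicitly designed to bypass. Note also that the conditional-uniformity statement of your Step 2 is essentially \cref{lem:uniform-non-fix}, which the paper \emph{deduces from} \cref{thm:distrib} by a pointwise inclusion--exclusion identity that only ever invokes $L^1$ information from \cref{thm:distrib} applied to the induced walks on $[n]\setminus T$; there is no pointwise density estimate anywhere in the argument.

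A secondary gap: you assert, via conjugation-invariance, that $h(\pi):=\mathbb{P}[A_S\mid Y_t=\pi]$ ``depends only on $f(\pi)$.'' Conjugation-invariance only gives that $h$ is a class function, i.e.\ depends on the full cycle type of $\pi$, not merely on the number of fixed points; reducing to $f(\pi)$ requires showing that the events $\{a\text{ never touched}\}$ for $a\in\on{Fix}(\pi)$ are nearly independent of each other and of the non-fixed cycle structure, \emph{conditionally on} $Y_t=\pi$. That conditional near-independence is itself an unproven pointwise claim of the same difficulty as your ``main obstacle.'' Step 1 (Chen--Stein matching of $|U_t|$ to a truncated Poisson, plus exchangeability to place $U_t$ uniformly) is fine on its own, but it does not touch the crux of the theorem.
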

\begin{remark}
Since $\nu_t$ is uniformly distributed on the set of permutations with a given number of fixed points, it is straightforward to prove that 
\[d_{\on{TV}}(\nu_t,U_{\mf{S}_n}) = d_{\on{TV}}(\on{Pois}(1+\gamma_t),\on{Pois}(1)) + o_n(1);\]
combined with \cref{thm:distrib}, this recovers the main result of Teyssier \cite[Theorem~1.1]{Tey20}. Additionally, identifying the stronger statement in \cref{thm:distrib} leads to a significantly simpler proof of the main result of \cite{Tey20}; in particular, our proof of \cref{thm:distrib} uses only the $L^2$-estimates from the work of Diaconis-Shahshahani \cite{DS81} along with an application of Young's rule, thereby completely bypassing the $L^1$-theory and intricate combinatorial analysis in \cite{Tey20}; see \cref{subsec:outline} for further discussion. 
\end{remark}

We discuss two additional lines of work which are related to our result. First, recall that for an ergodic Markov chain with stationary distribution $\pi$, a stopping time $\kappa$ is said to be a stationary time if $X_{\kappa} \sim \pi$; $\kappa$ is said to be a strong stationary time if it is stationary and additionally, $X_{\kappa}$ is independent of $\kappa$. In 1985, Broder devised a strong stationary time for the random transposition walk concentrated around time $2n\log{n}$; an improved strong stationary time concentrated around $n\log{n}$ was given by Matthews \cite{M88} who also claimed a further improvement to $n\log{n}/2$; however, his proof contained a subtle error which was only noticed nearly 30 years later by White. Subsequently, White \cite{W19} constructed a rather intricate strong stationary time concentrated in a window of size $O(n)$ around time $n\log{n}/2$; in particular, this establishes the analogue of \cite{DS81} for the separation distance. The relationship of the present work to \cite{W19} is as follows: White showed that there exists some stopping time $\kappa$, concentrated around $n\log{n}/2$, for which $X_{\kappa}$ is exactly uniformly distributed; in contrast, \cref{thm:main} shows that the at time $\tau$, which is a lower bound for mixing as discussed above, $X_{\tau}$ is (TV-)approximately uniform. We remark that extending White's result to the random $k$-cycle walk is an open problem, even for $k=3$; on the other hand, our analysis can be adapted to prove an analogous result for any fixed $k$ (see work of Hough \cite{Hou16} and Nestoridi and Olesker-Taylor \cite{NO22}).  

Second, note that the random transposition walk is naturally associated to a random graph process, where we add the edge $\{i,j\}$ to the graph whenever the transposition $(ij)$ is chosen. Let $G_t$ denote the corresponding (random) graph at time $t$ and let $C_t$ denote the largest connected component of $G_t$; by standard results in random graphs, $C_t$ is macroscopic if $t\geq cn/2$ for $c > 1$). A well-known result of Schramm \cite{S05} shows that the distribution of the lengths of the largest cycles of $X_t|_{C_t}$ is close to the distribution of the lengths of the largest cycles for the uniform measure on $\mf{S}_{C_t}$; the natural conclusion of this line of work would be to establish the following conjecture due to  Nathana\"{e}l Berestycki (see \cite[Conjecture~1.3]{Tey20}).

\begin{conjecture} With notation as above, suppose $t \geq cn/2$ for $c > 1$. Then,
    \[d_{\on{TV}}(X_t|_{C_t}, U_{\mf{S}_{C_t}}) = o_n(1).\]
\end{conjecture}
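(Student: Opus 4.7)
My plan would be to first condition on the random multigraph $\mathbb{G}_t$ obtained by recording each drawn transposition (with multiplicity). Because the transpositions $T_1,\ldots,T_t$ are i.i.d.\ under $P_n$, the ordering of the $T_\ell$'s is uniformly random conditional on $\mathbb{G}_t$. Moreover any transposition $T_\ell$ that touches the giant $C_t$ must be entirely inside $C_t$ (otherwise its two endpoints would lie in the same component of $G_t$ by definition). Hence $X_t|_{C_t}$ is distributed as the uniformly-random-order product of the transpositions in $\mathbb{G}_t$ supported on $C_t$. The conjecture thus reduces to the following random-graph statement: for a typical realization of the induced multigraph on $C_t$, the uniformly-random-order product of its edges (viewed as transpositions) is close in total variation to the uniform measure on $\mf{S}_{C_t}$.

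The principal obstacle is a shortage of raw mixing budget. At time $t = cn/2$ the expected number of transpositions supported on $C_t$ is $\rho(c)^2 \cdot cn/2 = \Theta(n)$, where $\rho(c)$ is the survival probability of a Galton--Watson tree with offspring $\on{Pois}(c)$; by contrast, the Fourier-analytic $L^2$-bounds of \cite{DS81} that drive our proof of \cref{thm:distrib} require on the order of $|C_t|\log|C_t|/2 = \Theta(n\log n)$ such transpositions. Any proof must therefore leverage the fine structure of the giant's multigraph rather than merely its edge count --- most naturally the $\Theta(n)$ surplus edges (cycles beyond a spanning tree of $C_t$), each of which supplies a transposition beyond what is needed to connect $C_t$. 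Establishing that a uniformly-ordered product of transpositions drawn from such a supercritical random multigraph mixes in $\Theta(|C_t|)$ steps rather than $\Theta(|C_t|\log|C_t|)$ would be a statement of independent interest, and is the step I expect to be hardest.

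An alternative route is to try to push \cref{thm:distrib} backward in time. The approximation of \cref{thm:distrib} already implies the conjecture in the regime $t \geq \lfloor n\log n/2\rfloor - O(n\log\log n)$, since in that range essentially all the non-uniformity of $X_t$ is carried by the untouched cards, and restriction to $C_t$ discards them; combined with Schramm's theorem \cite{S05} that the cycle-length statistics of $X_t|_{C_t}$ are already asymptotically uniform for all $t \geq cn/2$, extending the distributional approximation of \cref{thm:distrib} all the way down to $t = cn/2$ would close the gap. Unfortunately, the Young-rule argument behind \cref{thm:distrib} genuinely relies on $|t'|\leq n(\log\log n/4 - \log\log\log n)$; far from cutoff, the $L^2$-contributions from high-dimensional irreducible representations are no longer negligible, so pushing the distributional approximation down to $t = cn/2$ would seem to require a fundamentally different input --- perhaps a coupling- or entropy-based argument at the level of individual components --- to replace the Fourier-analytic core of our approach.
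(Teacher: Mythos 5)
The statement you have been given is labelled a \emph{conjecture} in the paper and is not proved there in the full range $t \geq cn/2$ with fixed $c > 1$; the paper proves only the near-cutoff case (\cref{thm:giant}), conditioning on the event that the giant coincides with the touched set and invoking \cref{lem:uniform-non-fix}. So there is no paper proof of the statement as given, and your recognition that the general case remains open is correct.

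Your diagnosis of why is also on target. The reduction you describe is sound: conditionally on the multiset of chosen transpositions the order is exchangeable, and every chosen transposition that touches $C_t$ lies entirely inside $C_t$, so $X_t|_{C_t}$ is a uniformly-ordered product of the transpositions landing in the giant. The budget shortfall is real too: at $t = cn/2$ only $\Theta(n)$ transpositions fall inside a giant of size $\Theta(n)$, versus the $\Theta(n\log n)$ that the $L^2$ argument of \cite{DS81} driving \cref{thm:distrib} demands; this is precisely why the paper's hypotheses stop at $|t'| \lesssim n\log\log n$. One caveat on your second paragraph: \cref{thm:distrib} alone does not yield \cref{thm:giant}. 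The paper needs the additional ``approximate sufficient statistic'' step \cref{lem:uniform-non-fix} (self-reducibility plus inclusion--exclusion) to convert the global distributional approximation of $X_t$ on $\mf{S}_n$ into a statement conditional on the untouched set, and only then does restriction to $C_t$ discard the non-uniformity. Your intuition that the residual non-uniformity is carried entirely by untouched cards is right, but turning that into a bound is a nontrivial intermediate step, not a direct corollary of \cref{thm:distrib}. You are also right that Schramm's theorem \cite{S05} controls only cycle-length statistics, not total variation on all of $\mf{S}_{C_t}$, so reaching $t = cn/2$ would require a genuinely new mechanism rather than an extension of the Young-rule/$L^2$ computation.
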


Our final main result confirms this conjecture for $t$ near cutoff. 

\begin{theorem}
    \label{thm:giant}
With notation as above, if $|t'|\le n(\log\log n/4 - \log\log\log n)$, then 
\[d_{\on{TV}}(X_t|_{C_t}, U_{\mf{S}_{C_t}})\leq n^{-1+o(1)}.\]    
\end{theorem}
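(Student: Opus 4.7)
The plan is to deduce Theorem~\ref{thm:giant} from Theorem~\ref{thm:distrib} together with standard random graph estimates on $G_t$. The key observation is that $\nu_t$ already has a ``giant plus isolated vertices'' structure built in: under $\nu_t$ a random subset $S_t$ of size (truncated) $\on{Pois}(\gamma_t)$ is declared fixed, while the complement is permuted uniformly. So the task naturally splits into two: first, identifying $[n]\setminus C_t$ with the set $I_t$ of isolated vertices of $G_t$ (a random graph fact), and second, identifying $I_t$ with $S_t$ under the near-coupling implicit in Theorem~\ref{thm:distrib} (a joint distributional fact).

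The first identification is a standard random graph computation: since $t$ lies in the critical window for connectivity of $G_t$, classical Erd\H{o}s--R\'enyi estimates yield $[n]\setminus C_t = I_t$ with probability $1-n^{-\omega(1)}$ throughout the stated range of $|t'|$, and the marginal distribution of $I_t$, as an unordered subset of $[n]$, is within total variation $n^{-1+o(1)}$ of a uniform subset of size $M\sim\on{Pois}(\gamma_t)$ (truncated to $\le n$), matching the marginal of $S_t$ under $\nu_t$. The main step is to upgrade Theorem~\ref{thm:distrib} to the joint total variation bound
\[
d_{\on{TV}}\bigl((X_t,I_t),(X^{\ast},S^{\ast})\bigr)\le n^{-1+o(1)},
\]
where $(X^{\ast},S^{\ast})$ is sampled by first drawing $S^{\ast}$ as in $\nu_t$ and then $X^{\ast}$ uniformly from $\{\sigma\in\mf{S}_n : S^{\ast}\subseteq\on{Fix}(\sigma)\}$. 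My approach is to retrace the proof of Theorem~\ref{thm:distrib} while carrying along the set of untouched vertices; since that proof factors through the Diaconis--Shahshahani $L^2$ estimates together with Young's rule applied to Young subgroups of the form $\mf{S}_{[n]\setminus S}\times\mf{S}_S$, the decomposition naturally produces a statement indexed by $S$ and hence a joint bound. An alternative, more direct, route is to condition on $I_t = I$: the conditional distribution of $X_t$ becomes the random transposition walk on $[n]\setminus I$ run for $t$ steps and conditioned on touching every vertex, which one can compare to $\nu_t$ on $[n]\setminus I$ after removing the coupon-collecting conditioning using the first step.

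Given the joint bound, Theorem~\ref{thm:giant} follows almost immediately: conditioning on $I_t=S$ yields that $X_t|_{[n]\setminus S}$ is within total variation $n^{-1+o(1)}$ of $U_{\mf{S}_{[n]\setminus S}}$, and the random graph step identifies $[n]\setminus S$ with $C_t$ outside an event of overwhelming probability. The hard part is carrying the graph-theoretic data $I_t$ through the representation-theoretic machinery of Theorem~\ref{thm:distrib} without incurring more than a polynomial-in-$n$ loss, and handling cleanly the small but nonzero probability that $G_t$ has a non-singleton component outside the giant.
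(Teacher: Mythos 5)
Your high-level plan agrees with the paper's: condition on a random graph event that forces $[n]\setminus C_t$ to coincide with the untouched set $I_t$ and to have size at most $(\log n)^2$ (which, by standard Erd\H{o}s--R\'enyi estimates, holds with probability $1-n^{-1+o(1)}$ throughout the stated range of $t'$), and then appeal to a conditional distributional approximation via the law of total probability. The observation that $\nu_t$ already carries a ``fixed subset plus uniform remainder'' structure, and that the marginal of $|I_t|$ matches the (truncated) Poisson law in $\nu_t$, is also right, as is your concern about non-singleton small components, which is absorbed into the same random graph estimate.

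The genuine gap is in the step you yourself flag as ``the hard part'': showing that the conditional law of $X_t$ given $I_t = M$ is within TV distance $n^{-1+o(1)}$ of uniform on $\mf{S}_{[n]\setminus M}$. Neither of your two routes supplies this. Route~1 (carrying the set of untouched vertices through the representation-theoretic proof of \cref{thm:distrib}) is not viable as stated: the $L^2$ argument in Section~3 is entirely about class functions on $\mf{S}_n$, and there is no mechanism in that proof to track a distinguished subset through Plancherel and the Diaconis--Shahshahani character bounds. Route~2 (condition on $I_t=I$, interpret as the walk on $[n]\setminus I$ conditioned on a coupon-collector event, then ``remove the conditioning'') fails because that conditioning is \emph{not} a small perturbation: given $\mc{G}(I)$, the probability that the walk touches every element of $[n]\setminus I$ by time $t$ is $\exp(-\gamma_t)\,(1+o(1))$, which is of constant order, so dropping it changes the law by $\Theta(1)$ rather than $n^{-1+o(1)}$. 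The paper handles exactly this obstacle in \cref{lem:uniform-non-fix}, by writing $\mbm{1}_{\mc{F}(M)}=\sum_{T\supseteq [M]}(-1)^{|T|-M}\mbm{1}_{\mc{G}(T)}$, using the self-reducibility of the walk to apply \cref{thm:distrib} on each event $\mc{G}(T)$ (\cref{lem:self-reducible}), and then re-summing the alternating series with the aid of the explicit estimates in \cref{lem:f-g,lem:n-m-factorial}. That inclusion-exclusion argument is the content missing from your proposal.
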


\subsection{Proof outline}\label{subsec:outline} Our proof of \cref{thm:main} can be roughly decomposed into three steps, which we briefly discuss. 

The first step is to prove \cref{thm:distrib}, which characterizes, up to a vanishingly small error, the distribution of the random transposition walk at time $t$. As mentioned in the introduction, this is a strengthening of work of Teyssier \cite{Tey20} on the cutoff profile for the random transposition walk. Interestingly, our proof is considerably simpler, essentially due to the reason that once the correct statement has been identified, the task amounts to establishing an upper bound of the form $o_n(1)$ (as opposed to \cite{Tey20}, where the (macroscopic) total variation distance to the uniform distribution is computed asymptotically exactly). Our proof of \cref{thm:distrib} proceeds along the same lines as that of Diaconis-Shahshahani \cite{DS81} in that we pass from the $L^1$ distance to the $L^2$ distance using Cauchy-Schwarz, and then use Plancharel's formula along with the same character estimates as \cite{DS81}. The only point of departure is that the uniform distribution on $\mf{S}_n$ is replaced by the distribution $\nu_t$; however, the Fourier coefficients of $\nu_t$ can be calculated exactly using a simple application of Young's rule. In contrast, the proof in \cite{Tey20} is based on approximating the $L^1$ distance using the Fourier inversion formula, as well as fairly intricate combinatorial analysis involving the Murnaghan–Nakayama rule. We believe that our method of distributional approximation can be applied to obtain cutoff profiles in many settings.

The second step is to bootstrap \cref{thm:distrib} to show that at times sufficiently close to $n\log{n}/2$, the number of untouched points is an ``approximate sufficient statistic'' for the random transposition walk (\cref{lem:uniform-non-fix}) in the sense that, conditioned on the untouched set being $M \subseteq [n]$ (which is assumed to be not too large), the distribution of the random transposition walk is close to the uniform distribution on permutations which leave each element of $M$ fixed. The proof is carried out entirely in physical space. The key idea is to exploit the following ``self-reducibility'' of the random transposition walk: the distribution of the walk at time $t$, conditioned on the untouched set \emph{containing} $M$, coincides exactly with the distribution of the random transposition walk at time $t$ run on the ground set $[n]\setminus M$. To see why this is useful, observe that the indicator of the event that the untouched set at time $t$ is exactly $M$ can be written as a signed combination of events of the form ``the untouched set at time $t$ contains $T$'' using the principle of inclusion-exclusion. Applying \cref{thm:distrib} to each of these summands and performing some explicit calculations using the form of the distribution $\nu_t$ completes the proof. 

Finally, given \cref{lem:uniform-non-fix}, \cref{thm:giant} follows immediately by using the law of total probability and classical results from the theory of random graphs. The deduction of \cref{thm:giant} from \cref{thm:main} is also relatively standard, and can be viewed as a blend of the classical hitting-time result for connectivity in random graph theory along with Broder's strong stationary time for the random transposition walk.

\subsection{Notation}\label{subsec:not}
We use standard asymptotic notation throughout, as follows. For functions $f=f(n)$ and $g=g(n)$, we write $f=O(g)$ or $f \lesssim g$ to mean that there is a constant $C$ such that $|f(n)|\le C|g(n)|$ for sufficiently large $n$. Similarly, we write $f=\Omega(g)$ or $f \gtrsim g$ to mean that there is a constant $c>0$ such that $f(n)\ge c|g(n)|$ for sufficiently large $n$. Finally, we write $f\asymp g$ or $f=\Theta(g)$ to mean that $f\lesssim g$ and $g\lesssim f$, and we write $f=o(g)$ or $g=\omega(f)$ to mean that $f(n)/g(n)\to0$ as $n\to\infty$. Subscripts on asymptotic notation indicate quantities that should be treated as constants.

\subsection{Acknowledgements}\label{subsec:ack}
MS thanks Evita Nestoridi and Dominik Schmid for useful conversations. We thank Nathana\"{e}l Berestycki and Lucas Teyssier for comments on the manuscript. VJ is supported by NSF CAREER award DMS-2237646. This research was conducted during the period MS served as a Clay Research Fellow. 

\subsection{Organization} The remainder of this paper is organized as follows. In \cref{sec:reduc}, we show how to deduce \cref{thm:main,thm:giant} from \cref{thm:distrib}. The main work in this section is to prove \cref{lem:uniform-non-fix}, which we undertake in \cref{subsec:suff-stat}, before completing the proofs in \cref{subsec:proof-finish}. Finally, we prove \cref{thm:distrib} in \cref{sec:distrib}.

\section{Reduction to distributional approximation at a fixed time}\label{sec:reduc}

\subsection{An approximate sufficient statistic}
\label{subsec:suff-stat}
Throughout this section, we will make use of the following notation: denote the (random) sequence of transpositions chosen by the random transposition walk up to time $t$ by $(i_1,j_1),\dots, (i_t, j_t)$. For a subset $T \subseteq [n]$, let $\mc{G}(T) = \mc{G}^t(T)$ denote the event that $\{i_1, j_1,\dots, i_t, j_t\} \cap T = \emptyset$; in words, the transposition walk does not ``touch'' $T$ up to (and including) time $t$. Let $\mc{F}^t(T) = \mc{F}(T)$ denote the event $\mc{G}(T) \cap \bigcap_{j \in T^c}\mc{G}^{c}(\{j\})$; in words, the transposition walk touches everything outside $T$ and does not touch $T$ up to (and including) time $t$. For convenience of notation, we will denote $\mc{F}([M])$ and $\mc{G}([M])$ by $\mc{F}(M)$ and $\mc{G}(M)$ for an integer $1\leq M \leq n$.

We begin with the following proposition, which is central to proving our main results. Roughly, it asserts that for times close to $n\log n/2$, the ``untouched set'' is an approximate sufficient statistic in the sense that the distribution of the random transposition walk, conditioned on the untouched set being $S$, is close to the uniform distribution on $\mf{S}_{[n]\setminus S}$ (provided that $S$ is not too large). More precisely, we have the following.  

\begin{proposition}\label{lem:uniform-non-fix}
Fix $t$ and $M$ such that $|t'|\le n(\log\log n/4 -2\log\log\log n)$ and $M\le (\log n)^2$. 
Let $X_t^{M}$ denote the random variable $X_t$ conditioned on the event $\mc{F}(M)$. Let $\mu^M$ denote the measure on $\mf{S}_n$ induced by the the uniform distribution on $\mf{S}_{n\setminus [M]}$ via the natural inclusion $\mf{S}_{n\setminus [M]} \hookrightarrow \mf{S}_n$. Then 
\[d_{\on{TV}}({X}_t^{M},\mu^M) \le n^{-1+o(1)}.\]
\end{proposition}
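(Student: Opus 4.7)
The plan is to expand $\mbm{1}[\mc{F}(M)]$ via inclusion--exclusion, invoke self-reducibility of the random transposition walk to pass to a walk on a reduced ground set, apply \cref{thm:distrib} to replace each resulting distribution by $\nu_t$, and verify via a binomial cancellation that the resulting expression equals $\mathbb{P}[\mc{F}(M)]\mu^M(\sigma)$ up to admissible error. Throughout, write $\pi^{T}_t$ for the distribution of the random transposition walk on $[n]\setminus T$ at time $t$, viewed as a measure on $\mf{S}_n$ via the identity on $T$, and $\nu^{T}_t$ for the analogous extension of the $\nu_t$-distribution.

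Starting from $\mbm{1}[\mc{F}(M)]=\sum_{T\supseteq [M]}(-1)^{|T|-M}\mbm{1}[\mc{G}(T)]$ and noting that conditional on $\mc{G}(T)$ every chosen transposition is uniform on pairs from $[n]\setminus T$ (so that $X_t\mid\mc{G}(T)=\pi^T_t$), we obtain
\[
\mathbb{P}[\mc{F}(M)\cap\{X_t=\sigma\}]=\sum_{T\supseteq [M]}(-1)^{|T|-M}\mathbb{P}[\mc{G}(T)]\,\pi^T_t(\sigma).
\]
I would introduce a truncation parameter $J:=M+(\log n)^2$. For $|T|\le J$, a short Taylor calculation gives $t-(n-|T|)\log(n-|T|)/2=t'+O((\log n)^3)$, so the reduced walk remains in the window of \cref{thm:distrib} (the slack $2\log\log\log n$ in the hypothesis on $|t'|$ providing exactly the needed room), yielding $d_{\on{TV}}(\pi^T_t,\nu^T_t)\le n^{-1+o(1)}$. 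For $|T|>J$, Stirling-tail control gives $\sum_{|T|>J}\mathbb{P}[\mc{G}(T)]\lesssim (\gamma_t/n)^M\gamma_t^{(\log n)^2}/((\log n)^2)!$, which is super-polynomially smaller than $\mathbb{P}[\mc{F}(M)]\asymp(\gamma_t/n)^M e^{-\gamma_t}$, so such terms may be absorbed into the error budget using only the trivial bound $\pi^T_t\le 1$.

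The main computation is that the truncated ideal sum $\sum_{|T|\le J}(-1)^{|T|-M}\mathbb{P}[\mc{G}(T)]\nu^T_t(\sigma)$ matches $\mathbb{P}[\mc{F}(M)]\mu^M(\sigma)$ to leading order, for $\sigma$ fixing $[M]$. For $|T|\le J$, the approximations $\mathbb{P}[\mc{G}(T)]=(\gamma_t/n)^{|T|}(1+O((\log n)^3/n))$, $(n-|T|)!=n!\,n^{-|T|}(1+O((\log n)^4/n))$, $\gamma_t^{(n-|T|)}=\gamma_t(1+O((\log n)^3/n))$, and $\mathbb{P}[\on{Pois}(\gamma_t^{(n-|T|)})=s]=e^{-\gamma_t}\gamma_t^s/s!\,(1+o(1))$ hold uniformly. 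Letting $F=\on{Fix}(\sigma)\supseteq [M]$ and interchanging the $T$-sum with the $S'$-sum implicit in the definition of $\nu^T_t$, the leading contribution collapses through the binomial identity
\[
\sum_{T:[M]\subseteq T\subseteq F}(-1)^{|T|-M}\gamma_t^{|T|}(1+\gamma_t)^{|F|-|T|}=\gamma_t^M(1+\gamma_t-\gamma_t)^{|F|-M}=\gamma_t^M,
\]
which is crucially independent of $|F|$. Combining with the unsigned version $\mathbb{P}[\mc{F}(M)]\sim(\gamma_t/n)^M e^{-\gamma_t}$ (the same manipulation without the $\nu$-factor) and $\mu^M(\sigma)=1/(n-M)!\sim n^M/n!$, both the ideal sum and $\mathbb{P}[\mc{F}(M)]\mu^M(\sigma)$ equal $e^{-\gamma_t}\gamma_t^M/n!$ to leading order.

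Aggregating the \cref{thm:distrib} errors yields $\sum_\sigma|\mathbb{P}[\mc{F}(M),X_t=\sigma]-(\text{ideal sum})|\le 2n^{-1+o(1)}\sum_T\mathbb{P}[\mc{G}(T)]\sim 2n^{-1+o(1)}(\gamma_t/n)^M e^{\gamma_t}$, and dividing by $\mathbb{P}[\mc{F}(M)]\sim(\gamma_t/n)^M e^{-\gamma_t}$ produces a total variation bound of $n^{-1+o(1)}e^{2\gamma_t}=n^{-1+o(1)}$, using $\gamma_t\le (\log n)^{1/2}$ in the assumed window of $t'$. The main obstacle is the careful bookkeeping of the several lower-order corrections hidden inside the $\sim$ and $(1+o(1))$ terms above -- each genuinely of order $(\log n)^{O(1)}/n$, but requiring verification that, after being summed against the weights $\mathbb{P}[\mc{G}(T)]$ and divided by $\mathbb{P}[\mc{F}(M)]$, they stay within the stated $n^{-1+o(1)}$ budget.
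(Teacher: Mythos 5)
Your plan follows the same three-pronged route as the paper: inclusion--exclusion to write $\mathbb{P}[\mc{F}(M),X_t=\sigma]$ as a signed sum over $\mc{G}(T)$-events, self-reducibility to identify $X_t\mid\mc{G}(T)$ with the random transposition walk on $[n]\setminus T$, an application of \cref{thm:distrib} on each reduced ground set, and a binomial collapse of the resulting $\nu^T$-weighted signed sum to $\mathbb{P}[\mc{F}(M)]/(n-M)!$. These are precisely \cref{lem:f-g,lem:n-m-factorial,lem:self-reducible} in the paper's organization, and your accounting of why the $2\log\log\log n$ slack in the hypothesis on $|t'|$ is exactly the room needed to keep the reduced walk inside the window of \cref{thm:distrib} matches the paper's check in \cref{lem:self-reducible}.

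There is, however, one genuine imprecision you should repair. You define the ideal sum by truncating at $|T|\le J$, but then justify its value via the binomial identity $\sum_{[M]\subseteq T\subseteq F}(-1)^{|T|-M}\gamma_t^{|T|}(1+\gamma_t)^{|F|-|T|}=\gamma_t^M$, which is over the \emph{untruncated} range $[M]\subseteq T\subseteq\on{Fix}(\sigma)$. When $|\on{Fix}(\sigma)|> J$ these two sums differ, and you cannot simply untruncate because the uniform-in-$T$ approximations you invoke (for $\mathbb{P}[\mc{G}(T)]$, for $(n-|T|)!$, for the truncated Poisson normalizer) are only valid in the regime $|T|=n^{o(1)}$. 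The paper resolves exactly this by splitting on $|\on{Fix}(\sigma)|$ rather than on $|T|$: for $\sigma$ with $|\on{Fix}(\sigma)|\le(\log n)^3$ the $T$-sum is automatically bounded and the identity is computed untruncated (\cref{lem:n-m-factorial}); for $\sigma$ with more fixed points, one shows directly -- using the elementary fact that a uniformly random permutation has $\ge k$ fixed points with probability at most $1/k!$ -- that both $\mathbb{P}[\{X_t=\sigma\}\wedge\mc{F}(M)]$ and $\mathbb{P}[\mc{F}(M)]/(n-M)!$ aggregate to $n^{-1+o(1)}\mathbb{P}[\mc{F}(M)]$ over that set, so those $\sigma$ can be discarded wholesale. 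That is the missing structural step in your sketch; once inserted, your argument is the paper's argument.
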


The proof of this proposition requires a few intermediate lemmas. The first lemma provides a relationship between the probabilities of the events $\mc{F}(T)$ and $\mc{G}(T)$. Recall that $\gamma_t = \exp(-2t'/n)$.  

\begin{lemma}
\label{lem:f-g}
Let $M \in \mb{N}$ and $T \subseteq [n]$ such that $M \leq T = n^{o(1)}$. Then,
\begin{itemize}
    \item $\mb{P}[\mc{G}(T)] = (1+n^{-1+o(1)}){\mb{P}[\mc{G}(M)]\gamma_t^{|T|-M}}{n^{M-|T|}}$,
    \item $\mb{P}[\mc{F}(M)] = (1+n^{-1+o(1)})\mb{P}[\mc{G}(M)]\exp(-\gamma_t)$, 
    \item $\sum_{T \supseteq [M], T \leq n^{o(1)}} \mb{P}[\mc{G}(T)] \leq n^{o(1)}\mb{P}[\mc{F}(M)]$. 
\end{itemize}
\end{lemma}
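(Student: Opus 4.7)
My plan is to first obtain a closed form for $\mathbb{P}[\mathcal{G}(T)]$, Taylor-expand it, and then feed the result into an inclusion-exclusion argument. For the first item, observe that at each step $s \le t$ the pair $(i_s, j_s)$ is drawn uniformly from $[n]^2$ independently of the past, so the probability that neither chosen card lies in $T$ is $((n-|T|)/n)^2$, and by independence
\[\mathbb{P}[\mathcal{G}(T)] = \left(1 - \tfrac{|T|}{n}\right)^{2t}.\]
In particular $\mathbb{P}[\mathcal{G}(T)]$ depends only on $|T|$. Writing $2t = n\log n + 2t' + O(1)$ and using $\log(1-u) = -u + O(u^2)$,
\[2t\log\!\left(1 - \tfrac{|T|}{n}\right) = \log\!\left(n^{-|T|}\gamma_t^{|T|}\right) + O\!\left(\tfrac{|T|^2 \log n}{n}\right).\]
Since $|T| = n^{o(1)}$, the error exponent is $n^{-1+o(1)}$, which gives $\mathbb{P}[\mathcal{G}(T)] = (1 + n^{-1+o(1)}) n^{-|T|}\gamma_t^{|T|}$, and analogously for $M$. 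Dividing yields the first item.

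For the second item, I apply inclusion-exclusion to $\mathcal{F}(M) = \mathcal{G}([M]) \cap \bigcap_{j \notin [M]}\mathcal{G}^c(\{j\})$:
\[\mathbb{P}[\mathcal{F}(M)] = \sum_{k=0}^{n-M} (-1)^k \binom{n-M}{k} \left(1 - \tfrac{M+k}{n}\right)^{2t}.\]
I split the sum at $K = (\log n)^3$. For $k \le K$ the approximation from the first item applies (the multiplicative error $\exp(O((M+k)^2 \log n/n))$ is $1 + n^{-1+o(1)}$ since $M + K = n^{o(1)}$), so the partial sum equals
\[(1 + n^{-1+o(1)})\, n^{-M}\gamma_t^M \sum_{k = 0}^{K}(-1)^k\binom{n-M}{k}(\gamma_t/n)^k.\]
Completing this inner partial sum to the full binomial $(1 - \gamma_t/n)^{n - M}$ costs at most $\sum_{k > K}\binom{n - M}{k}(\gamma_t/n)^k \le \sum_{k > K}\gamma_t^k/k!$, which is super-polynomially small because $\gamma_t \le (\log n)^{1/2}$, so the completed sum equals $\exp(-\gamma_t)(1 + O(\gamma_t^2/n)) = \exp(-\gamma_t)(1 + n^{-1+o(1)})$. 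For the tail $k > K$ of the original sum, the crude bound $\binom{n-M}{k}(1-(M+k)/n)^{2t} \le \binom{n}{k} n^{-k} \gamma_t^k \le \gamma_t^k / k!$ gives an equally tiny contribution. Combining, $\mathbb{P}[\mathcal{F}(M)] = (1 + n^{-1+o(1)})\, n^{-M}\gamma_t^M \exp(-\gamma_t)$, which together with the first item establishes the second.

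The third item is the analogous calculation without the alternating signs: the same split produces $\sum_{T \supseteq [M],\, |T| = n^{o(1)}} \mathbb{P}[\mathcal{G}(T)] = (1 + o(1))\, n^{-M}\gamma_t^M \exp(\gamma_t)$, and taking the ratio with $\mathbb{P}[\mathcal{F}(M)]$ gives $(1 + o(1))\exp(2\gamma_t)$. The hypothesis $|t'| \le n(\log\log n/4 - 2\log\log\log n)$ forces $\gamma_t \le (\log n)^{1/2 - o(1)}$, so $\exp(2\gamma_t) = n^{o(1)}$, which is precisely the third item. The main obstacle throughout is bookkeeping: one must pick the truncation threshold $K$ large enough that the partial binomial expansion matches $(1 - \gamma_t/n)^{n - M}$ to within $n^{-1+o(1)}$, yet small enough that the Taylor-expansion error $\exp(O(K^2 \log n / n))$ from the first item also stays at $1 + n^{-1+o(1)}$. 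Any polylogarithmic $K$ satisfies both simultaneously, thanks precisely to the hypothesis on $t'$ which keeps $\gamma_t$ slow-growing.
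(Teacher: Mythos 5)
Your proof is correct and follows essentially the paper's route: the closed form $\mathbb{P}[\mathcal{G}(T)]=(1-|T|/n)^{2t}$, a Taylor expansion to identify the $n^{-|T|}\gamma_t^{|T|}$ factor, and inclusion–exclusion for $\mathcal{F}(M)$ truncated at a polylogarithmic threshold, together with $\gamma_t\le(\log n)^{1/2+o(1)}$ to control the resulting $e^{O(\gamma_t)}$ factors. The only cosmetic difference is that you truncate the alternating series and bound the tail directly, whereas the paper uses the Bonferroni inequalities to sandwich the same partial sums.
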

\begin{proof}
For the first item, note that
\begin{align*}
\label{eq:T-excl-bound}
    \mb{P}[\mc{G}(T)] = \left(\frac{n-|T|}{n}\right)^{2t} &= (1 + n^{-1+o(1)})\exp\left(-\frac{2t|T|}{n}\right) \nonumber \\
    &= (1+n^{-1+o(1)})\exp\left(-\frac{2tM}{n}\right)\exp\left(-\frac{2t(|T|-M)}{n}\right) \nonumber \\
    &= (1+n^{-1+o(1)})\frac{\mb{P}[\mc{G}(M)]\gamma_t^{|T|-M}}{n^{|T|-M}}.
\end{align*}
For the second item, the key point is that by the principle of inclusion-exclusion, \[\mbm{1}_{\mc{F}(M)} = \sum_{T\supseteq [M]}(-1)^{|T|-M}\mbm{1}_{\mc{G}(T)}.\]
Therefore, by the Bonferroni inequality, we have for any $k \in \mb{Z}^{\geq 0}$ that
\[\sum_{T\supseteq [M], |T| \leq M+2k+1}(-1)^{|T|-M}\mbm{1}_{\mc{G}(T)}\leq \mbm{1}_{\mc{F}(M)} \leq \sum_{T\supseteq [M], |T| \leq M+2k}(-1)^{|T|-M}\mbm{1}_{\mc{G}(T)}.\]
Taking expectations (with, say, $k = \lfloor (\log n)^2 \rfloor$) and using the first item along with $\gamma_t = o(\log n)$, we have
\begin{align*}
\label{eq:f-g-ratio}
    (1+n^{-1+o(1)})\frac{\mb{P}[\mc{F}(M)]}{\mb{P}[\mc{G}(M)]} &= \sum_{j=0}^{2k}\binom{n-M}{j} \frac{(-\gamma_t)^{j}}{n^j} \pm \binom{n-M}{2k+1}\frac{(-\gamma_t)^{2k+1}}{n^{k+1}} \nonumber \\
    &= (1+n^{-1+o(1)})\left(\sum_{j=0}^{2k}\frac{(-\gamma_t)^j}{j!} \pm \frac{(-\gamma_t)^{2k+1}}{(2k+1)!}\right) \nonumber \\
    &= (1+n^{-1+o(1)})\exp(-\gamma_t). 
\end{align*}
Finally, for the third item, we have by combining the first two items that
\begin{align*}
    \sum_{T\supseteq [M], T \leq n^{o(1)}} \mb{P}[\mc{G}(T)]
    &\lesssim \sum_{s=0}^{n^{o(1)}}\binom{n-M}{s}\mb{P}[\mc{F}(M)]e^{\gamma_t} \left(\frac{\gamma_t}{n}\right)^{s}\\
    &\lesssim \mb{P}[\mc{F}(M)]e^{\gamma_t}\sum_{s=0}^{n^{o(1)}}\left(\frac{3\gamma_t}{s}\right)^{s} \leq \mb{P}[\mc{F}(M)]\cdot n^{o(1)},
\end{align*}
where in the final inequality, we have used that $\gamma_t \leq \sqrt{\log n}$. \qedhere
\end{proof}

In order to state the next lemma, we need some notation. Let $t'\in \mb{R}$ be as in the statement of \cref{lem:uniform-non-fix}. For any $T\subseteq [n]$, let 
\[t_T = \lfloor 2^{-1}(n-|T|)\log(n-|T|) \rfloor + t'(n-|T|)/n.\]
We let $\nu^T = \nu^T_{t_T}$ denote the distribution on $\mf{S}_{[n]\setminus T}$ which is defined in the same way as the distribution $\nu_t$ appearing in the statement of \cref{thm:distrib} with two modifications: the time $t$ is replaced by $t_T$ and the ground set $[n]$ is replaced by $[n]\setminus T$. Using the natural inclusion $\mf{S}_{[n]\setminus T} \hookrightarrow \mf{S}_n$, we can (and will) view $\nu^T$ as a distribution on $\mf{S}_n$. Note that, by definition, 
\[\gamma_{t_T} = e^{-2t'(n-|T|)/n(n-|T|)} = e^{-2t'/n} = \gamma_t.\]

\begin{lemma}
    \label{lem:n-m-factorial}
For any $\sigma \in \mf{S}_n$ such that $\on{Fix}(\sigma) \supseteq [M]$ and $|\on{Fix}(\sigma)| \leq n^{o(1)}$, we have that
\[\sum_{[M] \subseteq T \subseteq \on{Fix}(\sigma)}(-1)^{|T|-M}\mb{P}[\mc{G}(T)]\cdot \nu^T(\sigma) = (1+n^{-1+o(1)})\frac{\mb{P}[\mc{F}(M)]}{(n-M)!}.\]
\end{lemma}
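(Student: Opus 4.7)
The plan is to compute $\nu^T(\sigma)$ in closed form, substitute the expressions for $\mb{P}[\mc{G}(T)]$ and $\mb{P}[\mc{F}(M)]$ from \cref{lem:f-g}, and then collapse the resulting alternating sum via the binomial theorem. Throughout, let $F := \on{Fix}(\sigma)$, $f := |F|$, and $r := |T|$, so the summation runs over $[M] \subseteq T \subseteq F$.

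First, I would unwind the sampling process defining $\nu^T$. A run produces $\sigma$ exactly when the ``forced-fixed'' set $S_t \subseteq [n]\setminus T$ is a subset $S \subseteq F \setminus T$ (of some size $k = M_t$), and the subsequent uniform permutation of $([n]\setminus T)\setminus S$ equals $\sigma$ restricted to that set, which has probability $1/(n-r-k)!$. Using $\binom{n-r}{k}^{-1}/(n-r-k)! = k!/(n-r)!$, the $k!$ cancels those in the Poisson weights, and summing over subsets $S$ of each size $k$ collapses the expression into a binomial sum:
\[\nu^T(\sigma) = \frac{e^{-\gamma_t}}{(n-r)!\,\mb{P}[\on{Pois}(\gamma_t)\le n-r]} \sum_{k=0}^{f-r}\binom{f-r}{k}\gamma_t^{k} = (1+n^{-\omega(1)})\frac{e^{-\gamma_t}(1+\gamma_t)^{f-r}}{(n-r)!},\]
where the Poisson truncation factor is $1 - n^{-\omega(1)}$ because $\gamma_t \le \sqrt{\log n}$ while $n - r = n(1-o(1))$.

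Next, combining the first two items of \cref{lem:f-g} yields $\mb{P}[\mc{G}(T)] = (1+n^{-1+o(1)})\mb{P}[\mc{F}(M)]\, e^{\gamma_t}\, \gamma_t^{\,r-M}/n^{\,r-M}$. Multiplying by $\nu^T(\sigma)$ cancels the $e^{\pm\gamma_t}$, and reindexing by $j = r - M$ (noting there are $\binom{f-M}{j}$ choices of $T$ with $|T| = M+j$) gives
\[\sum_{[M]\subseteq T\subseteq F}(-1)^{|T|-M}\mb{P}[\mc{G}(T)]\,\nu^T(\sigma) = (1+n^{-1+o(1)})\frac{\mb{P}[\mc{F}(M)]}{(n-M)!}\sum_{j=0}^{f-M}\binom{f-M}{j}\frac{(-\gamma_t)^j(1+\gamma_t)^{f-M-j}(n-M)!}{(n-M-j)!\,n^{j}}.\]
For $j \le f - M \le n^{o(1)}$ and $M \le (\log n)^2$, the ratio $(n-M)!/[(n-M-j)!\,n^{j}]$ equals $1+n^{-1+o(1)}$. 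Pulling this factor out and applying the binomial theorem leaves $((1+\gamma_t)-\gamma_t)^{f-M} = 1$, yielding the claimed identity.

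The main obstacle is purely bookkeeping: I must ensure that the multiplicative $(1+n^{-1+o(1)})$ errors accumulated across the at most $n^{o(1)}$ summands do not blow up. This is guaranteed by the hypotheses $|\on{Fix}(\sigma)| \le n^{o(1)}$ and $M \le (\log n)^{2}$, which simultaneously bound the number of terms and control both the binomial coefficients $\binom{f-M}{j}$ and the factorial ratio $(n-M)!/(n-M-j)!$ against $n^{j}$. Identifying the correct closed form for $\nu^T(\sigma)$ is the only substantive ingredient; once available, the rest of the argument is an application of the binomial theorem.
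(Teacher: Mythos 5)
Your approach is correct and essentially identical to the paper's: you compute the same closed form for $\nu^T(\sigma)$ (applying the binomial theorem to the inner Poisson sum one step earlier than the paper does), convert $\mb{P}[\mc{G}(T)]$ into $\mb{P}[\mc{F}(M)]$ via the first two items of \cref{lem:f-g}, and collapse the alternating outer sum over $j=|T|-M$ by the binomial theorem. The informal pulling of the multiplicative $(1+n^{-1+o(1)})$ factors through the alternating sum, which you flag as ``bookkeeping,'' is handled at the same level of rigour in the paper's own proof.
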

\begin{proof}
    Consider $\sigma \in \mf{S}_n$ satisfying the assumptions of the lemma and let $T\subseteq \on{Fix}(\sigma)$. By definition, 
    \begin{align*}
        \nu^T(\sigma) 
        &= \sum_{r= 0}^{n - |T|} \frac{\mb{P}[\on{Pois}(\gamma_{t_T}) = r]}{\mb{P}[\on{Pois}(\gamma_{t_T}) \leq n - |T|]}\cdot \frac{\binom{|\on{Fix(\sigma)}| - |T|}{r}}{\binom{n-|T|}{r}(n-|T|-r)!}.
    \end{align*}
    Since $\gamma_{t_T} = \gamma_t$ and for any $0 \leq m \leq s \leq n$ with $s = n^{o(1)}$ and $r = n^{o(1)}$, 
    \begin{align*}
    \frac{\mb{P}[\on{Pois}(\gamma_{t_T}) = r]}{n^{s-m}\cdot \binom{n-s}{r}(n-r-s)!} = \frac{e^{-\gamma_{t_T}}(\gamma_{t_T})^{r}}{r!}\cdot \frac{1}{n^{s-m}\cdot \binom{n}{r}(n-r-s)!}   
    = \frac{e^{-\gamma_{t_T}}(\gamma_{t_T})^{r}}{(n-m)!}(1+n^{-1 + o(1)}), 
\end{align*}
it follows that
\begin{align*}
\nu^T(\sigma) = (1+n^{-1+o(1)}) \frac{e^{-\gamma_t}n^{|T|-M}}{(n-M)!}\sum_{r=0}^{\on{|Fix(\sigma)|}-|T|}\gamma_t^{r}\binom{|\on{Fix(\sigma)}| - |T|}{r}.
\end{align*}
Therefore, by \cref{lem:f-g}, we have
\begin{align*}
    (1+n^{-1+o(1)})(n-M)!&\sum_{[M]\subseteq T \subseteq \on{Fix}(\sigma)}(-1)^{|T|-M}\mb{P}[\mc{G}(T)]\nu^T(\sigma)\\
    &= \mb{P}[\mc{F}(M)]\sum_{[M]\subseteq T \subseteq \on{Fix}(\sigma)}(-\gamma_t)^{|T|-M}\sum_{r=0}^{\on{|Fix(\sigma)|}-|T|}\gamma_t^{r}\binom{|\on{Fix(\sigma)}| - |T|}{r}\\
    &= \mb{P}[\mc{F}(M)]\sum_{[M]\subseteq T \subseteq \on{Fix}(\sigma)}(-\gamma_t)^{|T|-M}(1+\gamma_t)^{|\on{Fix(\sigma)}| - |T|}\\
    &= \mb{P}[\mc{F}(M)]\sum_{s\geq 0}\binom{\on{Fix(\sigma)}-M}{s}(1+\gamma_t)^{s}(-\gamma_t)^{\on{Fix(\sigma)}-M-s}\\
    &= \mb{P}[\mc{F}(M)]. \qedhere
\end{align*}
\end{proof}

The next lemma is the crucial juncture where we exploit the ``self-reducibility'' of the random transposition walk. 

\begin{lemma}
\label{lem:self-reducible}
Fix $t$ such that $|t'|\le n(\log\log n/4 -2\log\log\log n)$. For $T\subseteq [n]$, 
let $\wt{X}_t^{T}$ denote the random variable $X_t$ conditioned on the event $\mc{G}(T)$. Let $\nu^T$ denote the distribution on $\mf{S}_n$ appearing in the statement of \cref{lem:n-m-factorial}. If $T \leq n^{o(1)}$, then 
\[d_{\on{TV}}({\wt{X}}_t^{T},\nu^T) \le n^{-1+o(1)}.\]    
\end{lemma}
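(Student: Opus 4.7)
The plan is to exploit a \emph{self-reducibility} property of the random transposition walk: conditioned on $\mc{G}(T)$, the walk $\wt{X}_t^T$ is distributed exactly as the random transposition walk on the smaller ground set $[n]\setminus T$ after $t$ steps. Granting this, the lemma reduces to a direct application of \cref{thm:distrib} on $[n]\setminus T$, together with a routine comparison of Poisson parameters.

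For the self-reducibility, note that the $t$ steps of the walk are i.i.d.\ uniform draws from $[n]^2$, and the event $\mc{G}(T)$ factors as the intersection over $\ell\in[t]$ of the per-step events $\{(i_\ell,j_\ell)\in ([n]\setminus T)^2\}$. Hence the conditioned steps remain independent, each uniform on $([n]\setminus T)^2$; equivalently, each step is distributed as $P_{n-|T|}$ on $[n]\setminus T$. Consequently $\wt{X}_t^T$ equals in distribution the walk $Y_t\sim P_{n-|T|}^{\ast t}$ on $[n]\setminus T$, viewed in $\mf{S}_n$ via the natural inclusion.

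Next I would apply \cref{thm:distrib} on $[n]\setminus T$ to $Y_t$. Writing $t = \lfloor (n-|T|)\log(n-|T|)/2\rfloor + t''$ and expanding $(n-|T|)\log(n-|T|) = n\log n - |T|\log n - |T| + O(|T|^2/n)$ yields $t'' = t' + |T|(\log n+1)/2 + O(|T|^2/n)$. Since $|T|=n^{o(1)}$ implies $|T|\log n = o(n\log\log\log n)$, the extra $\log\log\log n$ slack in the hypothesis $|t'|\le n(\log\log n/4 - 2\log\log\log n)$ is enough to guarantee $|t''|\le (n-|T|)(\log\log(n-|T|)/4 - \log\log\log(n-|T|))$ for large $n$. \cref{thm:distrib} then yields $d_{\on{TV}}(Y_t,\tilde\nu)\le n^{-1+o(1)}$, where $\tilde\nu$ is defined exactly as $\nu^T$ but with Poisson parameter $\tilde\gamma:=e^{-2t''/(n-|T|)}$ replacing $\gamma_{t_T}=\gamma_t$.

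Finally, I would compare $\tilde\nu$ and $\nu^T$: they differ only in the Poisson parameter used to sample the number of non-fixed points, so coupling through the common Poisson variable gives $d_{\on{TV}}(\tilde\nu,\nu^T)\le d_{\on{TV}}(\on{Pois}(\tilde\gamma),\on{Pois}(\gamma_t)) + e^{-\Omega(n\log n)}$ (the last term from the truncation). A direct computation gives $|t''/(n-|T|) - t'/n| \le |T|\log n/(2(n-|T|)) + |t'|\,|T|/(n(n-|T|)) + O(|T|^2/n^2) = n^{-1+o(1)}$, and since $\gamma_t = O(\sqrt{\log n})$ controls the Lipschitz constant of $e^{-x}$ at the relevant scale, $|\tilde\gamma - \gamma_t| = n^{-1+o(1)}$; hence $d_{\on{TV}}(\on{Pois}(\tilde\gamma),\on{Pois}(\gamma_t)) = n^{-1+o(1)}$, and the triangle inequality closes the argument. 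The only real subtlety is the bookkeeping in the previous paragraph: the $2\log\log\log n$ slack in the hypothesis is precisely what is needed to invoke \cref{thm:distrib} at time $t$ on the reduced ground set $[n]\setminus T$.
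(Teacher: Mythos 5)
The proposal is correct and follows the paper's proof essentially step for step: conditional independence of the i.i.d.\ steps gives the self-reducibility $\wt{X}_t^T \sim X_t^T$; Theorem~1.2 is then applied on the reduced ground set $[n]\setminus T$ at time $t$, using the $2\log\log\log n$ slack in the hypothesis to absorb the $|T|\log n$ shift; and the resulting Poisson parameter is compared with $\gamma_t$, with the discrepancy bounded by $n^{-1+o(1)}$.
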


\begin{proof}
Let $\wt{\nu}^T$ be defined in a similar manner to $\nu^T$ with the time $t_T$ replaced by
\[\wt{t}_T = \lfloor 2^{-1}(n-|T|)\log(n-|T|) \rfloor + t'/n.\]
Since $|\gamma_{t_T} -\gamma_{\wt{t}_T}| = n^{-1+o(1)}$ and $\on{Pois}(a+b)$ has the same distribution as the independent sum $\on{Pois}(a) + \on{Pois}(b)$ for $a,b \geq 0$, it readily follows that $d_{\on{TV}}(\nu_T, \wt{\nu}_T) \leq n^{-1+o(1)}$. Therefore, it suffices to show that $d_{\on{TV}}(\wt{X}_t^T, \wt{\nu}^T) \leq n^{-1+o(1)}$. But this follows immediately from \cref{thm:distrib} upon noting that (i) $X_t \mid \mc{G}(T) \sim X_t^T$, the random transposition walk on $[n]\setminus T$ at time $t$ and (ii) the assumed bound on $|t'|$ and $|T|$ implies that
\[|t'| + T\log n \leq (n-|T|)(\log\log(n-|T|)/4 - \log\log\log(n-|T|))\]
so that \cref{thm:distrib} is indeed applicable for $[n]\setminus T$ at time $t$.  \qedhere
\end{proof}

We are now in position to prove \cref{lem:uniform-non-fix}. 

\begin{proof}[Proof of \cref{lem:uniform-non-fix}]

Let $\mf{S}_n^M \subseteq \mf{S}_n$ denote the subset of permutations which fix each element of $[M]$. The conclusion of \cref{lem:uniform-non-fix} asserts that
\begin{equation}
\label{eq:tv-explicit}
\sum_{\sigma \in \mf{S}_n^{M}}\Big|\mb{P}[\{X_{t} = \sigma\} \wedge \mc{F}(M)] - \mb{P}[\mc{F}(M)]\cdot (n-M)!^{-1}\Big| \le n^{-1+o(1)}\cdot \mb{P}[\mc{F}(M)].
\end{equation}
We break up the sum on the left hand side into two parts depending on the size of $\on{Fix}(\sigma)$. Namely, let $\mc{S} = \mf{S}_n^{M} \cap \{\sigma \in \mf{S}_n : |\on{Fix}(\sigma)| \leq (\log n)^3\}$ and let $\mc{L} = \mf{S}_n^M \setminus \mc{S}$. 

\paragraph{\bf Contribution from $\mc{L}$} Let $\mu^M$ be the distribution appearing in the statement of \cref{lem:uniform-non-fix} and let $\nu^M$ denote the distribution appearing in the statement of \cref{lem:n-m-factorial}. We have
\begin{align}
\label{eq:X-many-fixed-points}
    \sum_{\sigma \in \mc{L}}|\mb{P}[\wt{X}_t^M = \sigma]| &\leq n^{-1+o(1)} + \nu^M(\mc{L}) \nonumber \\ &\leq n^{-1+o(1)} + \mb{P}[\on{Pois}(\gamma_t) \geq (\log n)^2] + \mu^{M+(\log n)^2}(\mc{L}) \leq n^{-1+o(1)};
\end{align}
here, the first inequality uses \cref{lem:self-reducible} and the last inequality uses the basic combinatorial fact that the probability of a random permutation having exactly $k$ fixed points is at most $1/k!$ (see,~e.g.~[Eq~3.3]\cite{Tey20}).
Therefore,
\begin{align*}
    \sum_{\sigma \in \mc{L}}\mb{P}[\{X_t = \sigma\} \wedge \mc{F}(M)] \leq \sum_{\sigma \in \mc{L}}\mb{P}[\{X_t = \sigma\} \wedge \mc{G}(M)] \leq n^{-1 + o(1)}\mb{P}[\mc{G}(M)] \leq n^{-1 + o(1)}\mb{P}[\mc{F}(M)], 
\end{align*}
where the first inequality uses $\mc{F}(M)\subseteq \mc{G}(M)$, the second inequality uses \cref{eq:X-many-fixed-points}, and the final inequality uses the third item of \cref{lem:f-g}. 
Moreover, by the above combinatorial fact on fixed points of a random permutation, we have
\begin{align*}
    \sum_{\sigma \in \mc{L}}|\mb{P}[\mc{F}(M)](n-M)!^{-1}| = \mb{P}[\mc{F}(M)]\mu^M(\mc{L}) \leq n^{-1 + o(1)}\mb{P}[\mc{F}(M)].
\end{align*}
Combining the above two display equations with the triangle inequality shows that
\[\sum_{\sigma \in \mc{L}}\Big|\mb{P}[\{X_{t} = \sigma\} \wedge \mc{F}(M)] - \mb{P}[\mc{F}(M)]\cdot (n-M)!^{-1}\Big| \le n^{-1+o(1)}\cdot \mb{P}[\mc{F}(M)].\]

\paragraph{\bf Contribution from $\mc{S}$}

In this case, we have
 \begin{align*}
        \sum_{\sigma \in \mc{S}}&\Big|\mb{P}[\{X_{t} = \sigma\} \wedge \mc{F}(M)] - \mb{P}[\mc{F}(M)]\cdot (n-M)!^{-1}\Big| \\
        &\leq n^{-1+o(1)}\mb{P}[\mc{F}(M)] + \sum_{\sigma \in \mc{S}}\sum_{[M]\subseteq T \subseteq \on{Fix}(\sigma)}\Big|\mb{P}[\{X_{t} = \sigma\} \wedge \mc{G}(T)] - {\mb{P}[\mc{G}(T)]}\nu^T(\sigma)\Big| \\
        &\leq n^{-1+o(1)}\left(\mb{P}[\mc{F}(M)] + \sum_{[M]\subseteq T, |T| \leq n^{o(1)}}\mb{P}[\mc{G}(T)]\right)\\
        &\leq n^{-1+o(1)}\mb{P}[\mc{F}(M)].
 \end{align*}
 Here, for the first inequality, we have used the triangle inequality, \cref{lem:n-m-factorial}, and the following consequence of the principle of inclusion-exclusion:
 \[\mbm{1}_{X_t = \sigma}\cdot \mbm{1}_{\mc{F}(M)} = \sum_{[M]\subseteq T \subseteq \on{Fix}(\sigma)}(-1)^{|T|-M}\mbm{1}_{X_t=\sigma}\cdot \mbm{1}_{\mc{G}(T)};\]
 for the second inequality, we have used \cref{lem:self-reducible}, and for the final inequality, we have used the third item of \cref{lem:f-g}. 

 Together with the above estimate for $\sigma \in \mc{L}$, this completes the proof of \cref{eq:tv-explicit}. \qedhere
\end{proof}

%

\subsection{Finishing the proof}
\label{subsec:proof-finish}
Given the key \cref{lem:uniform-non-fix}, the deduction of \cref{thm:main,thm:giant} is relatively straightforward. 

\begin{proof}[Proof of \cref{thm:giant}] Let $t$ and $C_t$ be as in the statement of the theorem and let $S = S_t \subseteq [n]$ denote the set of indices which are not touched by time $t$.  Let $\mc{C}_t$ denote the event that $C_t = [n]\setminus S_t$ and $|S_t| \leq (\log n)^2$; by standard results in the theory of random graphs (see,~e.g.~\cite[Theorem~7.2]{B98}), it follows that $\mb{P}[\mc{C}_t] \leq n^{-1+o(1)}$. Therefore, it suffices to show that the assertion of \cref{thm:giant} holds, conditioned on the event $\mc{C}_t$; this follows immediately from \cref{lem:uniform-non-fix} using the law of total probability.  
\end{proof}

\begin{proof}[Proof of \cref{thm:main}]
Let $t^{\ast} = \Big\lfloor \frac{n(\log n)}{2}\Big\rfloor - \frac{n\log\log n}{4} + 2n\log\log\log n$. Let $S_{t^*}$ denote the set of indices which are not touched by time $t^*$. 
 
Consider the following two ``marking schemes'':

\begin{itemize} 
\item At time $t^*$, mark all of the elements in $[n]\setminus S_{t^*}$. At each subsequent step $s$, choose a random pair $(i_s j_s)$ according to the distribution $Q_n$ defined as follows:
\[Q_n = \begin{cases}
(i i) & \text{ with probability }2/n^2\text{ for }i \in S_{t^*}\\
(j j) & \text{with probability } n^{-2}\cdot (n-2|S_{t^*}|)/(n-|S_{t^*}|)\text{ for } j\notin S_{t^*}\\
(ij) & \text{ with probability }2/n^2\text{ for }i<j.
\end{cases}\]
Update the set of marked elements as follows: if the random transposition $(i_s, j_s)$ consists of an unmarked element and a marked element, or if it consists of the same unmarked element twice, then mark the unmarked element. Let $\kappa_m$ denote the first time when all of the elements are marked.  

\item Same as above, except the random pairs are chosen according to the distribution $P_n$ appearing in the definition of the random transposition walk. Let $\tau_m$ denote the first time when all of the elements are marked. 
\end{itemize}

Corresponding to the two marking schemes above, we consider the following two Markov processes:
\begin{itemize}
    \item $(Z_t)_{t\geq t^*}$, where $Z_{t^*}$ is sampled from the uniform distribution on $\mf{S}_{[n]\setminus S_{t^*}}$ (viewed as a distribution on $\mf{S}_n$) and the process evolves according to the random transposition walk with transitions sampled from $Q_n$.

    \item $(Y_t)_{t\geq t^*}$, defined as above, except with transitions sampled from $P_n$. 
\end{itemize}

The reason behind the introduction of $Q_n$ and $\kappa_m$ is the following: essentially due to Broder's argument, $Z_{\kappa_m} \sim U_{\mf{S}_n}$. To see this, let $M_t$ denote the set of marked elements at time $t$. We will prove by induction on $t\geq t^*$ that the distribution of $Z_t|_{M_t}$ is uniform over all of the $|M_t|!$ permutations. This is true at time $t^*$ by construction. Suppose it is true at time $t\geq t^*$. At time $t+1$, there are two possibilities. The first possibility is that $M_{t} = M_{t+1}$, so the transposition chosen was either contained entirely inside the marked set or is between two distinct unmarked elements; in either case, $X_{t+1}|_{M_{t+1}}$ is uniform due to the inductive hypothesis. The second possibility is that $M_{t+1} = M_{t} \cup \{v\}$ for some $v \notin M_{t}$. Since $X_t|_{M_t}$ is uniform, and since, by definition of $Q_n$, all of the transpositions $(v j)$, $j \in [n]$ are equally likely, it follows that $X_{t+1}|_{M_{t+1}}$ is also uniformly distributed. 

Since $Q_n$ and $P_n$ agree outside of the weight of the identity element, it follows by direct computation that $d_{\on{TV}}(P_n, Q_n) \leq |S_{t^*}|/n^2$. Moreover, it follows by direct computations as in \cref{lem:f-g} that 
\[\mb{P}[|S_{t^*}| \geq (\log n)^2] \leq n^{-\omega(1)}, \text{ and } \mb{P}[\max\{\tau_m, \kappa_m\} - t^* \geq n\log{n}] \leq n^{-\omega(1)}.\] 
In particular, except with probability $n^{-\omega(1)}$, the total variation distance between $P_n$ and $Q_n$ is $n^{-2+o(1)}$; since we need to run the process for only $n^{1+o(1)}$ steps (again, except with probability $n^{-\omega(1)}$), it follows that $\mb{P}[\tau_m \neq \kappa_m] \leq n^{-1+o(1)}$. Since $Z_{\kappa_m} \sim U_{\mf{S}_n}$, it therefore follows that 
\[d_{\on{TV}}(Y_{\tau_m}, U_{\mf{S}_n}) \leq n^{-1+o(1)}.\] 

By the definition of $t^*$, it follows from direct computation that $\mb{P}[\tau \leq t^*] \leq \exp(-(\log n)^{1/2 + o(1)})$. Moreover, on the event that $|S_{t^*}| = n^{o(1)}$, $\tau \leq n^{1+o(1)}$, and $\tau > t^*$, we have that $\tau = \tau_m$, except with probability $n^{-1+o(1)}$; this is because a transposition involving two elements of $S_{t^*}$ is chosen throughout the process with probability at most $|S_{t^*}|^2 n^{-1+o(1)} = n^{-1+o(1)}$. Therefore, as above, it follows that $\mb{P}[\tau = \tau_m] \geq 1 - \exp(-(\log n)^{1/2 + o(1)})$, so that it suffices to prove \cref{thm:main} with $X_\tau$ replaced by $X_{\tau_m}$.

By the above discussion, in order to prove the statement of the theorem, it suffices to show that $\on{d}_{TV}(X_{\tau_m}, Y_{\tau_m}) \leq n^{-1+o(1)}$. By the proof of \cref{thm:giant}, we have that $\on{d}_{TV}(X_{t^*}|_{[n]\setminus S_t^*}, Y_{t^*}) \leq n^{-1+o(1)}$. In particular, by the coupling characterization of total variation distance, there exists a coupling $(W,V)$ of these distributions such that $\mb{P}[W\neq V] \leq n^{-1+o(1)}$. Evolving this coupling using the same updates for the random transposition walk gives a coupling $(W',V')$ of $(X_{\tau_m}, Y_{\tau_m})$ such that $\mb{P}[W'\neq V'] \leq \mb{P}[W\neq V] \leq n^{-1+o(1)}$. Therefore, $d_{\on{TV}}(X_{\tau_m}, Y_{\tau_m}) \leq n^{-1+o(1)}$, as required. 
\end{proof}

\section{Approximating the distribution at a fixed time}\label{sec:distrib}

\subsection{Nonabelian Fourier Transform}\label{sec:non-abel}
In this subsection, we recall various standard notions concerning the (nonabelian) Fourier transform over finite groups. Given a finite group $G$, we let $\wh{G}$ denote the set of irreducible representations. 
We define the convolution of $f_1,f_2:G\to \mb{C}$ to be 
\[(f_1\ast f_2)(z) := \sum_{x\in G}f_1(x)f_2(x^{-1}z).\]
For $\rho \in \wh{G}$, let $d_{\rho}$ denote the dimension of the representation. The nonabelian Fourier transform for a function $f:G\to \mb{C}$ is the map $\wh{f} : \wh{G} \to \oplus_{\rho \in \wh{G}}(\mb{C}^{d_{\rho}\times d_{\rho}})$ given by
\[\wh{f}(\rho) = \sum_{g\in G}f(g)\rho(g).\]

Recall that for any $f_1, f_2 : G \to \mb{C}$ and $\rho \in \wh{G}$,
\[\wh{f_1\ast f_2}(\rho) = \wh{f_1}(\rho)\cdot \wh{f_2}(\rho).\]
In our applications, we will restrict attention to functions $f$ which are class functions (i.e.~are constant on conjugacy classes). For such functions, it follows from Schur's lemma (see e.g. \cite[Lemma~5]{DS81}) that 
\begin{equation}\label{eq:schur}
\wh{f}(\rho) = \bigg(\frac{\sum_{g\in G}f(g)\on{Tr}(\rho(g))}{d_{\rho}}\bigg)\cdot \on{Id}_{d_{\rho}}.
\end{equation}
Recall that the character of $\rho \in \wh{G}$ is defined by
\[\chi_{\rho}(g) = \on{Tr}(\rho(g)).\]
Observe that $\chi_{\rho}(g)$ is a class function (i.e.~constant on conjugacy classes). 

\subsection{Character estimates for $P_n^{\ast t}$}
The following is immediate from the notions introduced above and the definition of the random transposition walk.
\begin{lemma}\label{thm:charac-estim}
Let $\rho\in \wh{\mf{S}_n}$ and $\tau \in \mf{S}_n$ be an (arbitrary) transposition. Then
\[\wh{P_n}(\rho) := \bigg(\frac{1}{n} + \frac{n-1}{n} \cdot \frac{\chi_\rho(\tau)}{d_{\rho}}\bigg)\cdot \on{Id}_{d_{\rho}}.\] 
\end{lemma}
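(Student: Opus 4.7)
The plan is to apply the Schur-lemma formula \cref{eq:schur} directly to $P_n$, since the measure $P_n$ is manifestly a class function on $\mf{S}_n$: all transpositions lie in a single conjugacy class, so $P_n$ is constant on conjugacy classes (taking value $1/n$ on the identity class and $2/n^2$ on the transposition class, and zero elsewhere).

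Concretely, I would first observe that the total mass of $P_n$ on the transposition class equals $\binom{n}{2}\cdot 2/n^2 = (n-1)/n$, consistent with $P_n$ being a probability measure together with the identity mass $1/n$. Applying \cref{eq:schur} gives
\[\wh{P_n}(\rho) = \left(\frac{1}{d_\rho}\sum_{g\in\mf{S}_n} P_n(g)\,\chi_\rho(g)\right)\on{Id}_{d_\rho}.\]
Splitting the sum according to the support of $P_n$, and using $\chi_\rho(\on{Id})=d_\rho$ together with the fact that $\chi_\rho$ is constant on the transposition class (so $\chi_\rho((ij))=\chi_\rho(\tau)$ for every transposition $(ij)$), I get
\[\sum_{g} P_n(g)\chi_\rho(g) = \frac{1}{n}\cdot d_\rho + \frac{2}{n^2}\cdot\binom{n}{2}\cdot \chi_\rho(\tau) = \frac{d_\rho}{n} + \frac{n-1}{n}\,\chi_\rho(\tau).\]
Dividing by $d_\rho$ yields the asserted expression inside the parentheses.

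There is really no obstacle here; the statement is essentially the definition of $\wh{P_n}$ combined with Schur's lemma and the observation that $P_n$ is a class function supported on the identity and the conjugacy class of transpositions. The only thing worth noting for the reader is that the prefactor $(n-1)/n$, rather than $1$, arises because the chain has holding probability $1/n$ at the identity, which is precisely what ensures reversibility and compatibility with the later $L^2$-analysis via $\wh{P_n^{\ast t}}(\rho)=\wh{P_n}(\rho)^t$.
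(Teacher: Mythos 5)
Your proof is correct and is exactly the computation the paper treats as immediate: since $P_n$ is a class function supported on $\{\on{Id}\}$ and the transposition class, one applies \cref{eq:schur}, splits the sum, uses $\chi_\rho(\on{Id})=d_\rho$, and notes that the $\binom{n}{2}$ transpositions each carry mass $2/n^2$, contributing the factor $(n-1)/n$. This matches the paper's (one-line) justification, just with the arithmetic spelled out.
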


We need the following bound on the character ratio $\chi_{\rho}(\tau)/d_{\tau}$ appearing in the above expression. As is standard, we will identify elements of $\wh{\mf{S}_n}$ with (positive integer) partitions $\lambda$ of $n$. We use $\lambda \vdash n$ to denote that $\lambda$ is a partition of $n$. Given a partition $\lambda = (\lambda_1,\dots,\lambda_k)$ with $\lambda_1\geq \dots \geq \lambda_k$, we let $\lambda'$ denote the conjugate partition and $\lambda^* = (\lambda_2,\dots,\lambda_k)$ to be the partition of $n-\lambda_1$ obtained by truncating $\lambda$.  

\begin{lemma}\label{lem:char-eval}
Let $\lambda \vdash n$ and $\tau \in \mf{S}_n$ be a transposition. Then 
\[\chi_{\lambda}(\tau) = - \chi_{\lambda'}(\tau) = d_{\lambda} \binom{n}{2}^{-1}\bigg(\sum_{i=1}^{n}\binom{\lambda_i}{2} - \binom{\lambda_i'}{2}\bigg).\]
In particular, if $\lambda_1 \geq n - (\log n)^{2}$, then 
    \[\frac{\chi_{\lambda}(\tau)}{d_{\lambda}} = \frac{\binom{\lambda_1}{2}}{\binom{n}{2}} + O(n^{-2+o(1)}) = 1-\frac{2(n-\lambda_1)}{n}+ O(n^{-2+o(1)}).\]

\end{lemma}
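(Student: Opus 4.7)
The plan is to handle the three assertions separately: the sign-flip identity $\chi_\lambda(\tau) = -\chi_{\lambda'}(\tau)$, the closed-form evaluation via the Young diagram, and the asymptotic estimate when $\lambda_1$ is close to $n$.

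First, for the symmetry, I would invoke the standard fact that tensoring the irreducible $V_\lambda$ with the sign representation produces the irreducible indexed by the conjugate partition $\lambda'$. Consequently $\chi_{\lambda'}(g) = \on{sgn}(g)\chi_\lambda(g)$ for every $g\in \mf{S}_n$, and specializing to a transposition (which has sign $-1$) gives the first equality. In particular $d_\lambda = d_{\lambda'}$, which is consistent with both sides of the stated formula.

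For the closed-form formula, my plan is to use the classical content formula for the action of the sum $Z = \sum_{\tau}\tau$ over all $\binom{n}{2}$ transpositions. This element lies in $Z(\mb{C}[\mf{S}_n])$, and by Schur's lemma it acts on the irreducible $V_\lambda$ as a scalar; tracing identifies this scalar as $\binom{n}{2}\chi_\lambda(\tau)/d_\lambda$. By the standard Jucys--Murphy calculation in the symmetric group, the same scalar equals the sum of contents $c(i,j) = j-i$ over all cells $(i,j)$ of the Young diagram of $\lambda$. A direct rearrangement that sums contents first along rows and then along columns yields
\[\sum_{(i,j)\in\lambda}(j-i) = \sum_i \binom{\lambda_i}{2} - \sum_i \binom{\lambda_i'}{2},\]
giving the claimed identity.

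For the asymptotics when $n - \lambda_1 \leq (\log n)^2$, I would bound the two contributions $\sum_{i\geq 2}\binom{\lambda_i}{2}$ and $\sum_i \binom{\lambda_i'}{2}$ each by $(\log n)^{O(1)}$. For the first, every $\lambda_i$ with $i\geq 2$ is at most $n - \lambda_1 \leq (\log n)^2$ and there are at most $(\log n)^2$ such rows. For the second, note that $\lambda_1' \leq 1 + (\log n)^2$, while for $i\geq 2$ the condition $\lambda_i' \geq 2$ forces the second-largest row to have length at least $i$, so $\lambda_i' \geq 2$ is possible only for $i \leq (\log n)^2$; each such $\lambda_i'$ is at most $\lambda_1' \leq 1+(\log n)^2$, so the total is again $(\log n)^{O(1)}$. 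Dividing by $\binom{n}{2}$ gives the error $O(n^{-2+o(1)})$, and expanding $\binom{\lambda_1}{2}/\binom{n}{2}$ to leading order in $(n-\lambda_1)/n$ produces the linearized form (the gap between $1/(n-1)$ and $1/n$ is absorbed into the error). The only mild obstacle is recognizing that the column-sum contribution is truly small; the fact that $\lambda_i' \leq 1$ as soon as $i$ exceeds the second-largest part settles that immediately.
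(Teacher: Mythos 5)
Your proof is correct. The paper itself does not supply a proof of this lemma: it simply cites the exact evaluation to \cite[Lemma~7]{DS81} (which traces back to Frobenius' character formula for transpositions) and remarks that the asymptotic follows by ``direct computation.'' You instead give a self-contained derivation via the central element $Z=\sum_{\tau}\tau$: Schur's lemma makes $Z$ act as a scalar on $V_\lambda$, tracing identifies that scalar as $\binom{n}{2}\chi_\lambda(\tau)/d_\lambda$, and decomposing $Z$ as the sum of Jucys--Murphy elements shows the scalar equals the content sum $\sum_{(i,j)\in\lambda}(j-i)$, which telescopes to $\sum_i\binom{\lambda_i}{2}-\binom{\lambda_i'}{2}$. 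This is a genuinely different (and arguably cleaner, more modern) route than the Frobenius-formula derivation underlying the citation; it avoids any explicit character computations. The sign identity via $V_{\lambda'}\cong V_\lambda\otimes\mathrm{sgn}$ and the asymptotic analysis (bounding $\sum_{i\ge2}\binom{\lambda_i}{2}$ and $\sum_i\binom{\lambda_i'}{2}$ by polylogarithmic quantities using $\lambda_2\le n-\lambda_1$ and $\lambda_1'\le 1+(n-\lambda_1)$, then expanding $\binom{\lambda_1}{2}/\binom{n}{2}$) are both sound and match what the paper's ``direct computation'' would entail. One cosmetic remark: the cleanest bound for the row contribution is $\sum_{i\ge 2}\binom{\lambda_i}{2}\le\binom{n-\lambda_1}{2}$ by convexity, which tightens your estimate slightly but changes nothing of substance.
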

The evaluation $\chi_{\lambda}/d_{\lambda}(\tau)$ when $\tau$ is a transposition appears as \cite[Lemma~7]{DS81}. The bound for $\lambda_1 \geq n-(\log n)^2$ follows by direct computation using the evaluation. 

We also need the following bounds on the dimension of irreducible representations which follow from the hook-length formula. 

\begin{lemma}\label{prop:hook-prop}
Let $\lambda\vdash n$. Then,
\begin{itemize}
\item \[d_{\lambda} \leq \binom{n}{\lambda_1}\cdot \sqrt{(n-\lambda_1)!};\]

\item for $\lambda_1 = n-x$ with $x \leq (\log n)^2$,
\[d_\lambda = \binom{n}{\lambda}\cdot d_{\lambda^{\ast}}\left(1 - \frac{x}{n} + O(n^{-2 + o(1)}) \right).\]
\end{itemize}

\end{lemma}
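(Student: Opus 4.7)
\medskip

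The plan is to derive both bounds from the hook length formula
\[d_\lambda = \frac{n!}{\prod_{(i,j)\in\lambda} h(i,j)},\]
writing $x = n-\lambda_1$ throughout and exploiting the key observation that the hook length in $\lambda$ at a cell $(i,j)$ with $i\geq 2$ coincides exactly with the hook length in $\lambda^{\ast}$ at $(i-1,j)$. Indeed, in both diagrams the arm is $\lambda_i - j$ and the leg is $\lambda_j' - i$, so that
\[\prod_{\substack{(i,j)\in\lambda\\ i\ge 2}} h(i,j) \;=\; \prod_{(i,j)\in\lambda^{\ast}} h^{\ast}(i,j) \;=\; \frac{x!}{d_{\lambda^{\ast}}}.\]
This identity reduces both statements to analyzing the product of the row-$1$ hooks $h(1,j) = \lambda_1 - j + \lambda'_j$ for $j=1,\dots,\lambda_1$.

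For the first bullet, I would split the row-$1$ product based on whether $j\le \lambda_2$ or $j>\lambda_2$. For $j>\lambda_2$ one has $\lambda'_j = 1$, so these hooks are precisely $1,2,\dots,\lambda_1-\lambda_2$, contributing $(\lambda_1-\lambda_2)!$. For $j\leq \lambda_2$, the elementary bound $h(1,j)\ge \lambda_1 - j + 1$ yields
\[\prod_{j=1}^{\lambda_1} h(1,j) \;\ge\; \prod_{j=1}^{\lambda_2}(\lambda_1-j+1)\cdot (\lambda_1-\lambda_2)! \;=\; \lambda_1!.\]
Combined with the hook identity above, this gives $d_\lambda \le \binom{n}{\lambda_1}\, d_{\lambda^{\ast}}$. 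Since $d_{\lambda^{\ast}}^{\,2}$ is at most $\sum_{\mu\vdash x} d_\mu^{\,2} = x!$, the first bullet follows.

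For the second bullet, I would carry out the same decomposition but now retain exact values. Using $\lambda_1!/(\lambda_1-\lambda_2)! = \prod_{j=1}^{\lambda_2}(\lambda_1-j+1)$, I obtain
\[d_\lambda \;=\; \binom{n}{\lambda_1}\, d_{\lambda^{\ast}} \cdot \prod_{j=1}^{\lambda_2} \frac{\lambda_1-j+1}{\lambda_1-j+\lambda'_j}.\]
Since $\lambda_2 \le x \le (\log n)^2$ and $\lambda_1 = n-x$, each denominator equals $n(1+O(x/n))$, and each factor is $1 - (\lambda'_j-1)/(n(1+O(x/n)))$. The critical combinatorial identity is
\[\sum_{j=1}^{\lambda_2}(\lambda'_j-1) \;=\; \Big(\sum_{j=1}^{\lambda_1}\lambda'_j\Big) - (\lambda_1-\lambda_2) - \lambda_2 \;=\; n - \lambda_1 \;=\; x,\]
using that $\lambda'_j = 1$ for $\lambda_2 < j \le \lambda_1$. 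Expanding the product and collecting error terms of size $O(x^2/n^2)=O(n^{-2+o(1)})$ gives the claimed asymptotic expansion $1 - x/n + O(n^{-2+o(1)})$.

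The proof is essentially bookkeeping once one notes the hook-length identity between rows $\ge 2$ of $\lambda$ and all of $\lambda^{\ast}$; the only subtle step is verifying the telescoping identity $\sum_{j\le \lambda_2}(\lambda'_j-1) = x$, which is the source of the leading correction $-x/n$. I do not anticipate any genuine obstacles.
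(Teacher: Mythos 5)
Your proof is correct and self-contained. The paper does not prove this lemma itself---it cites \cite[Corollary~2]{DS81} for the first item and says the second is ``a mild extension of \cite[Proposition~3.2]{Tey20} and follows from exactly the same proof there''---so there is no paper proof to compare against directly, but your derivation rests on exactly the structural fact (via the hook-length formula) that those references also exploit. The key identity you use,
\[
\prod_{\substack{(i,j)\in\lambda\\ i\ge 2}} h_\lambda(i,j) \;=\; \prod_{(i',j')\in\lambda^{\ast}} h_{\lambda^{\ast}}(i',j') \;=\; \frac{(n-\lambda_1)!}{d_{\lambda^{\ast}}},
\]
is right: for $i\ge 2$ the arm $\lambda_i-j$ at $(i,j)$ in $\lambda$ agrees with the arm at $(i-1,j)$ in $\lambda^\ast$, and the leg $\lambda_j'-i$ equals $(\lambda^\ast)'_j-(i-1)$ since $(\lambda^\ast)'_j=\lambda'_j-1$ for $j\le\lambda_1$. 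From there, your lower bound $\prod_{j=1}^{\lambda_1} h(1,j)\ge \lambda_1!$ (split at $j=\lambda_2$, using $h(1,j)\ge\lambda_1-j+1$) combined with $d_{\lambda^\ast}\le\sqrt{(n-\lambda_1)!}$ gives the first bullet, and your exact formula
\[
d_\lambda \;=\; \binom{n}{\lambda_1}\,d_{\lambda^{\ast}}\prod_{j=1}^{\lambda_2}\frac{\lambda_1-j+1}{\lambda_1-j+\lambda'_j}
\]
together with the telescoping identity $\sum_{j=1}^{\lambda_2}(\lambda'_j-1)=x$ (which you verify correctly) and the error accounting (cross terms of size $O(x^2/n^2)=O(n^{-2+o(1)})$, denominator corrections of the same size) gives the second. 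One small presentational point: the arm/leg matching is between $(i,j)$ in $\lambda$ and $(i-1,j)$ in $\lambda^\ast$, which your prose elides slightly, but the intended content is clear and the algebra is right. No gaps.
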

The first item is \cite[Corollary~2]{DS81}. The second item is a mild extension of \cite[Proposition~3.2]{Tey20} and follows from exactly the same proof there.  

Given the character estimates in \cref{lem:char-eval}, it is natural to decompose the irreducible representations based on the size of the largest part. Accordingly, let
\begin{align*}
\mc{L} &= \{\lambda_1\vdash n: \lambda_1\ge n - (\log n)^2\}.
\end{align*}

The next lemma allows us to control the mass of the Fourier coefficients of $P_n^{\ast t}$ on representations outside $\mc{L}$. The proof essentially appears in \cite[pg.~168-174]{DS81} (in \cite{DS81}, the corresponding result is stated only for $t'\geq 0$; an extension to negative $t'$ for the $L^1$ case is provided in \cite[Section~4.1]{Tey20}; for our purpose, owing to the bound on $t'$ and the definition of $\mc{L}$, we are able to directly use the estimates in \cite{DS81}). For completeness, we record the details in \cref{sec:deferred-proofs}.

\begin{lemma}\label{lem:removal}
Let $t$ be such that $|t'|\le n (\log \log n/4 - \log\log\log n)$. We have that 
\[\sum_{\substack{\lambda\vdash n\\\lambda\notin \mc{L}}} d_{\lambda}^2\cdot \bigg|\frac{1}{n} + \frac{n-1}{n} \cdot \frac{\chi_{\lambda}(\tau)}{d_{\lambda}}\bigg|^{2t}\leq n^{- 2+o(1)}.\]
\end{lemma}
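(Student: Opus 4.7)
I would follow the standard character-sum analysis of Diaconis--Shahshahani \cite{DS81}, partitioning the sum by $k := n-\lambda_1$ and bounding each level using the character estimate from \cref{lem:char-eval} and the dimension estimate from \cref{prop:hook-prop}. Write $r(\lambda) := \chi_\lambda(\tau)/d_\lambda$. The conjugation identity $\chi_\lambda(\tau) = -\chi_{\lambda'}(\tau)$ with $d_\lambda = d_{\lambda'}$ (\cref{lem:char-eval}) makes the sum symmetric under $\lambda \mapsto \lambda'$, so up to a factor of $2$ it suffices to consider $\lambda$ with $\lambda_1 \ge \lambda_1'$, whence $\lambda_1 \ge \sqrt n$. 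Moreover, $\bigl|\tfrac{1}{n} + \tfrac{n-1}{n}r(\lambda)\bigr|^{2t}$ differs from $|r(\lambda)|^{2t}$ by at most a factor $n^{O(1)}$ when $|r(\lambda)|$ is bounded away from $0$, and is dominated by $(1/n)^{2t} = n^{-n\log n(1+o(1))}$ otherwise; either way this factor is harmless.

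In the \emph{sharp regime} $(\log n)^2 < k \le \sqrt n$, \cref{lem:char-eval} yields $|r(\lambda)| \le 1 - 2k/n + O(k^2/n^2)$, which gives
\[|r(\lambda)|^{2t} \le e^{-4tk/n + O(tk^2/n^2)} = n^{-2k}\cdot e^{-4kt'/n} \cdot e^{O(k^2\log n/n)} \le n^{-2k}(\log n)^{k(1+o(1))},\]
where the $(\log n)^k$ factor absorbs the worst-case $e^{-4kt'/n}$ under the hypothesis $|t'| \le n\log\log n/4$ and the $e^{O(k^2\log n/n)}$ correction is $e^{o(k)}$ for $k\le \sqrt n$. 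Combined with $d_\lambda^2 \le \binom{n}{k}^2 k! \le n^{2k}/k!$ from \cref{prop:hook-prop} and $p(k) \le e^{O(\sqrt k)}$ partitions with first row $n-k$, each level contributes at most $p(k)(\log n)^{k(1+o(1))}/k!$, and summing over $k > (\log n)^2$ via Stirling's formula gives $n^{-\omega(1)}$ (indeed, for $k = (\log n)^2$ the individual term is already $\exp(-(\log n)^2 \log\log n\,(1+o(1)))$).

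In the \emph{moderate regime} $\sqrt n < k \le n/2$, I would use the exact bound $|r(\lambda)| \le 1 - 2k(n-k)/(n(n-1))$ (which degenerates to a constant less than $1$ as $k$ approaches $n/2$), and avoid the loose approximation $\log(1-x) \approx -x$ in favor of the direct inequality $(1-x)^{2t} \le e^{2t\log(1-x)}$. For $k$ of order $n$, $|r(\lambda)|$ is bounded away from $1$ by a constant, giving $|r(\lambda)|^{2t} \le e^{-\Omega(n\log n)}$ which easily dominates $d_\lambda^2 \le \binom{n}{k}^2 k! \le e^{(n/2)\log n + O(n)}$; for $k$ in between, a more careful Stirling estimate shows $d_\lambda^2 |r(\lambda)|^{2t} \le e^{-\Omega(n/\log n)}$ per $\lambda$. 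Finally, in the \emph{tall-and-thin regime} $\sqrt n \le \lambda_1 \le n/2$, the crude bound $|r(\lambda)| \le \lambda_1/(n-1) \le 1/2$ (from $\sum_i \lambda_i^2 \le \lambda_1 n$) combined with $d_\lambda^2 \le n! \le e^{n\log n}$ and $\log 2 > 1/2$ yields an $e^{-\Omega(n\log n)}$ contribution, summable over at most $p(n) = e^{O(\sqrt n)}$ partitions.

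The main obstacle is the careful accounting in the sharp regime: the $(\log n)^{O(k)}$ factor arising from the $t'$-shift must be defeated by $1/k!$, and this is delicate precisely at the boundary $k \approx (\log n)^2$. The hypothesis $|t'| \le n(\log\log n/4 - \log\log\log n)$ provides the extra $(\log\log n)^{-O(k)}$ cushion that makes the key inequality $(e \log n/k)^k \to 0$ go through with margin to spare; the other two regimes are more forgiving because the decay of $|r(\lambda)|^{2t}$ is already super-polynomial there, and the dimension bound from \cref{prop:hook-prop} is sufficient to close the argument without the extra help from $t'$.
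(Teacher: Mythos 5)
Your high-level plan (partition by $k = n - \lambda_1$, control each level via character and dimension estimates) follows the Diaconis--Shahshahani template and is broadly the same strategy as the paper. However, there is a fatal problem at the very first step.

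The symmetry reduction to $\lambda_1 \ge \lambda_1'$ is not legitimate here, for two reasons that compound. First, the summand $\bigl|\tfrac{1}{n} + \tfrac{n-1}{n}r(\lambda)\bigr|^{2t}$ is \emph{not} invariant under conjugation: while $r(\lambda') = -r(\lambda)$, the affine shift $\tfrac{1}{n} + \tfrac{n-1}{n}r$ is not odd in $r$. Second, the excluded set $\mc L$ is also not conjugation-invariant. The concrete failure is the sign representation $\lambda = (1^n)$: here $\lambda_1 = 1$ so $\lambda \notin \mc L$, $d_\lambda = 1$, $r(\lambda) = -1$, and the term is
\[
\Bigl|\tfrac{1}{n} - \tfrac{n-1}{n}\Bigr|^{2t} = \Bigl(1 - \tfrac{2}{n}\Bigr)^{2t} = n^{-2}e^{-4t'/n}\,(1+o(1)) = n^{-2+o(1)},
\]
which by itself already saturates the claimed bound. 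Its conjugate $\lambda' = (n)$ is the trivial representation, which lies in $\mc L$ and hence is excluded, so the symmetry reduction silently drops the sign representation (and the entire neighborhood $\lambda_1' \ge n-(\log n)^2$, treated separately as $S_5$ in the paper's proof). Your related claim that replacing $\bigl|\tfrac{1}{n} + \tfrac{n-1}{n}r\bigr|^{2t}$ by $|r|^{2t}$ costs only a ``harmless'' factor of $n^{O(1)}$ is precisely what cannot be afforded: for the sign representation $|r|^{2t} = 1$ versus the true value $n^{-2+o(1)}$, so any argument that rounds away the $1/n$ shift must land at a bound of size $\Omega(1)$, not $n^{-2+o(1)}$. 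The paper keeps the shift intact and isolates the $\lambda_1' > 0.7n$ piece, which carries the prefactor $e^{-4t/n} = n^{-2+o(1)}$; your approach has no mechanism to recover this.

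A secondary issue is the tall-and-thin regime: you invoke $d_\lambda^2 \le n!$, but with $\lambda_1 = n/2$ and $|r| \le 1/2$, that gives $n!\,(1/2+1/n)^{2t} = e^{(1-\log 2)n\log n + O(n)}$, which diverges since $1 - \log 2 > 0$. You need $d_\lambda \le \binom{n}{\lambda_1}\sqrt{(n-\lambda_1)!}$ from \cref{prop:hook-prop}, which at $\lambda_1 = n/2$ gives $d_\lambda^2 \le e^{(n/2)\log n + O(n)}$, and then $1/2 < \log 2$ closes the case. Your mention of ``$\log 2 > 1/2$'' suggests you had this in mind, but the bound you wrote is the wrong one.
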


\subsection{Character estimates for $\nu_t$}

In order to estimate the Fourier coefficients of $\nu_t$, we define $\xi_M$ to be the distribution on $\mf{S}_n$ obtained as follows: sample a uniform subset $S\subseteq [n]$ of size $M$, then sample a uniformly random permutation of $\mf{S}_{[n]\setminus S}$, and extend this to an element of $\mf{S}_n$ by fixing each element in $S$. It turns out that $\wh{\xi_M}$ vanishes on all irreducible representations $\lambda$ with $\lambda_1 < n-M$; on the remaining irreducible representations, we have an explicit evaluation of $\wh{\xi_M}$ (provided that $M$ is not too large). More precisely, we have the following.  

\begin{lemma}\label{lem:key-comp}
Let $\lambda\vdash n$.
\begin{itemize}
    \item If $\lambda_1 < n-M$, then $\wh{\xi_M}(\lambda) = 0$. 

    \item If $\lambda_1 \geq n-M$ and $M \leq n/3$, then
    \[\wh{\xi_M}(\lambda) = \frac{d_{\lambda^{\ast}}\binom{M}{n-\lambda_1}}{d_{\lambda}} \on{Id}_{d_{\lambda}}.\]
\end{itemize}
\end{lemma}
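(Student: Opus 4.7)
The plan is to exploit the class-function structure of $\xi_M$, translate its Fourier coefficient into a multiplicity in an induced representation, and then evaluate that multiplicity combinatorially via Young's/Pieri's rule.

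First, I would observe that $\xi_M(\sigma)$ depends only on $|\on{Fix}(\sigma)|$: indeed, from the definition,
\[\xi_M(\sigma) \;=\; \frac{\#\{S \in \binom{[n]}{M} : S \subseteq \on{Fix}(\sigma)\}}{\binom{n}{M}(n-M)!} \;=\; \frac{\binom{|\on{Fix}(\sigma)|}{M}}{\binom{n}{M}(n-M)!}.\]
In particular $\xi_M$ is a class function, so by \cref{eq:schur} $\wh{\xi_M}(\lambda) = c_\lambda \on{Id}_{d_\lambda}$ for the scalar $c_\lambda = d_\lambda^{-1}\sum_{\sigma}\xi_M(\sigma)\chi_\lambda(\sigma)$.

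Next, I would exchange the order of summation in the definition of $\xi_M$ to get
\[\sum_\sigma \xi_M(\sigma)\chi_\lambda(\sigma) \;=\; \frac{1}{\binom{n}{M}(n-M)!}\sum_{S \in \binom{[n]}{M}} \sum_{\sigma \in \mf{S}_{[n]\setminus S}} \chi_\lambda(\sigma).\]
For each $S$, Frobenius reciprocity identifies the inner sum with $(n-M)!\langle \chi_\lambda, \on{Ind}_{\mf{S}_{[n]\setminus S}}^{\mf{S}_n}\mbm{1}\rangle$, which is independent of $S$; the constants then cancel, and $c_\lambda$ equals the multiplicity of $\chi_\lambda$ in $\on{Ind}_{\mf{S}_{n-M}}^{\mf{S}_n}\mbm{1}$ divided by $d_\lambda$. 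Using transitivity of induction through the Young subgroup $\mf{S}_M \times \mf{S}_{n-M}$, the regular decomposition $\on{reg}(\mf{S}_M) = \bigoplus_{\mu \vdash M} d_\mu \chi_\mu$, and Pieri's rule (which asserts that $\on{Ind}_{\mf{S}_M \times \mf{S}_{n-M}}^{\mf{S}_n}(\chi_\mu \boxtimes \mbm{1})$ picks out exactly those $\chi_\lambda$ with $\lambda/\mu$ a horizontal strip of size $n-M$), this multiplicity simplifies to
\[\sum_{\mu \vdash M : \lambda/\mu \text{ is a horizontal strip}} d_\mu \;=\; f^{\lambda/(n-M)},\]
the number of standard Young tableaux of skew shape $\lambda/(n-M)$.

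It then remains to evaluate $f^{\lambda/(n-M)}$ in each case. If $\lambda_1 < n-M$, the partition $(n-M)$ does not fit inside $\lambda$, so $\lambda/(n-M)$ is not a valid skew diagram and $f^{\lambda/(n-M)} = 0$; this gives the first item. If $\lambda_1 \geq n-M$, the skew diagram $\lambda/(n-M)$ consists of the $M - (n-\lambda_1)$ remaining cells of row $1$ (lying in columns $n-M+1,\ldots,\lambda_1$) together with the rows of $\lambda^*$ intact (in columns $1,\ldots,\lambda_2,\ldots$). Under the hypothesis $M \leq n/3$,
\[\lambda_2 \;\leq\; n - \lambda_1 \;\leq\; M \;\leq\; n/3 \;\leq\; n - M,\]
so the two portions occupy disjoint sets of columns; hence $\lambda/(n-M)$ is disconnected, being the disjoint union of a single row of length $M - (n-\lambda_1)$ and the Young diagram of $\lambda^*$. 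Any SYT of this disconnected shape is obtained by choosing which $M - (n-\lambda_1)$ of the $M$ labels fill the row (automatically in increasing order) and populating $\lambda^*$ with a SYT on the remaining $n-\lambda_1$ labels, giving $f^{\lambda/(n-M)} = \binom{M}{n-\lambda_1} d_{\lambda^*}$, which matches the claimed formula. The main conceptual step is the Pieri/Frobenius identification of $c_\lambda \cdot d_\lambda$ with a count of skew SYT; the only hypothesis-sensitive point thereafter is the disconnectedness of $\lambda/(n-M)$, which is exactly where the bound $M \leq n/3$ is invoked.
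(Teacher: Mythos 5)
Your proof is correct and follows essentially the same route as the paper: both reduce $\wh{\xi_M}(\lambda)$, via Schur's lemma and Frobenius reciprocity, to the multiplicity of $\chi_\lambda$ in the trivial representation induced from the same pointwise stabilizer $\mf{S}_{n-M}\cong\mf{S}_{(n-M,1,\dots,1)}$, and then count tableaux. The only cosmetic difference is that you route through $\mf{S}_M\times\mf{S}_{n-M}$, the regular representation, and Pieri's rule to get the skew-SYT count $f^{\lambda/(n-M)}$, while the paper applies Young's rule directly and counts SSYT of hook content $(n-M,1^M)$; these are the same number under the standard bijection sending the $n-M$ forced ones in the first row to the subtracted single-row shape.
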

The proof of this lemma utilizes several basic facts about representations of the symmetric group; we refer the reader to \cite{Sag13} for a very readable account. 
\begin{proof}
    Since $\xi = \xi_M$ is a class function (i.e.~conjugation invariant), it follows from Schur's lemma that for any irreducible representation $\lambda$, $\wh{\xi}(\lambda) = a_{\lambda}\on{Id}_{d_\lambda}$, where
    \[a_{\lambda} = \frac{\sum_{\sigma \in \mf{S}_n}\xi(\sigma)\chi^{\lambda}(\sigma)}{d_{\lambda}}.\]
Consider the partition $\mu = \mu_M = (n-M,1,\dots,1)$ of $n$; the corresponding Young subgroup of $\mf{S}_n$ \cite[Definition~2.1.2]{Sag13} is given by
\[\mf{S}_{\mu} = \mf{S}_{\{1\}} \times \cdots \times \mf{S}_{\{M\}} \times \mf{S}_{\{M+1,\dots,n\}}.\]
Let $\psi = \psi^{\on{triv}}$ denote the trivial character on $\mf{S}_{\mu}$ and let $\psi^{\uparrow \mf{S}_n}$ denote the character of the induced representation \cite[Definition~1.12.2]{Sag13} on $\mf{S}_n$. Also, let $\chi^{\lambda}_{\downarrow \mf{S}_{\mu}}$ denote the character of the restricted reprsentation on $\mf{S}_{\mu}$. 
Then, by definition of $\xi$, we have that
\begin{align*}
    \sum_{\sigma \in \mf{S}_n}\xi(\sigma)\chi^{\lambda}(\sigma) &= \binom{n}{M}\sum_{\sigma \in \mf{S}_{\mu}}\frac{\chi^{\lambda}_{\downarrow \mf{S}_{\mu}}(\sigma)}{\binom{n}{M}(n-M)!}\\
    &= \frac{1}{(n-M)!}\sum_{\sigma \in \mf{S}_{\mu}}\psi(\sigma)\chi^{\lambda}_{\downarrow \mf{S}_{\mu}}(\sigma)\\
    &= \frac{1}{n!}\sum_{\sigma \in \mf{S}_n}\psi^{\uparrow \mf{S}_n}(\sigma) \chi^{\lambda}(\sigma),
\end{align*}
where the last equality is Frobenius reciprocity \cite[Theorem~1.12.6]{Sag13}. 
Since $\psi^{\uparrow \mf{S}_n}$ is the character for the permutation representation associated to $\mu$ \cite[Definition~2.1.5]{Sag13}, it follows from Young's rule \cite[Theorem~2.11.2]{Sag13} that
\[\psi^{\uparrow \mf{S}_n} = \sum_{\alpha \vdash n} K_{\alpha \mu} \chi^{\alpha}(\sigma),\]
where $K_{\alpha \mu}$ are the Kostka numbers. Hence, by the orthonormality of characters, it follows that
\[
\sum_{\sigma \in \mf{S}_n}\xi(\sigma)\chi^{\lambda}(\sigma) = K_{\lambda \mu}.
\]
Finally, let us evaluate the Kostka numbers $K_{\lambda \mu}$. By definition \cite[Definition~2.11.1]{Sag13}, these are equal to the number of semistandard Young tableaux \cite[Definition~2.9.5]{Sag13} with $n-M$ occurrences of $1$ and a single occurrence each of $2,\dots, M+1$. 
\begin{itemize}
    \item If $\lambda_1 < n-M$, then $K_{\lambda \mu} = 0$, since all the occurences of $1$ must necessarily be in the first row. This proves the first item. 

    \item Now, suppose $\lambda_1 \geq n-M$ and $M \leq n/3$. Notice that any valid semistandard Young tableaux can be obtained as follows: (i) first, we put $1$ in the leftmost $n-M$ entries of the top row (ii) next, we choose $n-\lambda_1$ elements of $\{2,\dots,M+1\}$ and populate rows $2$ and onwards with a standard Young tableaux of shape $\lambda^*$ with these entries \cite[Definition~2.5.1]{Sag13} (iii) finally, we place the remaining entries in increasing order in the top row. Therefore,
    \begin{align*}
        K_{\lambda \mu} &= \binom{M}{n-\lambda_1}\cdot \#\text{ of standard Young tableaux of shape $\lambda^*$}\\
        &= \binom{M}{n-\lambda_1}\cdot d_{\lambda^*}, 
    \end{align*}
    where the last equality follows from \cite[Theorem~2.6.5]{Sag13}. This completes the proof of the second item. \qedhere
\end{itemize}
\end{proof}

\subsection{Proof of \texorpdfstring{\cref{thm:distrib}}{}} We have the necessary preparation to prove \cref{thm:distrib}.
\begin{proof}[{Proof of \cref{thm:distrib}}]
Recall that $t = \lfloor n\log{n}/2 \rfloor + t'$, $|t'|\leq n(\log \log n/4 - 2\log\log\log n)$ and $\gamma_t = e^{-2t'/n}$. In order to simplify computations, we consider the following truncated version $\mu_t$ of $\nu_t$, which is defined in an identical fashion, except that the size $M_t$ of the random subset $S_t$ has the distribution $\mb{P}[M_t = x] = \mb{P}[\on{Pois}(\gamma_t) = x]/\mb{P}[\on{Pois}(\gamma_t) \leq (\log n)^2]$, for $x \leq (\log n)^2$.

Since $\gamma_t = o(\log n)$ by our assumption on $|t'|$, it follows via standard bounds on Poisson random variables that 
\[\mb{P}[\on{Pois}(\gamma_t)\ge (\log n)^2] \le (\log n)^{-\Omega((\log n)^2)} = n^{-\omega(1)},\]
from which, it readily follows using the natural coupling that $d_{\on{TV}}(\nu_t, \mu_t) = n^{-\omega(1)}$. Thus, in order to prove \cref{thm:distrib}, it suffices to prove that 
\[d_{\on{TV}}(X_t,\mu_t)\leq n^{-1+o(1)}.\]
Let $f_1$ be the probability density function of $X_t$ and $f_2$ be the probability density function of $\mu_t$. Using Cauchy-Schwarz followed by the Plancharel formula, as in \cite{DS81}, we have that
\begin{align*}
    4\cdot d_{\on{TV}}(X_t, \mu_t)^2 &\leq n!\sum_{\sigma \in \mf{S}_n}(f_1(\sigma) - f_2(\sigma))^2 \\
    &= \sum_{\lambda \in \wh{\mf{S}_n}}d_{\lambda}\on{Tr}\left((\wh{f_1}(\lambda) - \wh{f_2}(\lambda))(\wh{f_1}(\lambda) - \wh{f_2}(\lambda))^{\dagger}\right)\\
    &= \sum_{\lambda \notin \mc{L}}d_{\lambda}^2\bigg|\frac{1}{n} + \frac{n-1}{n} \cdot \frac{\chi_{\lambda}(\tau)}{d_{\lambda}}\bigg|^{2t} 
   + \sum_{\lambda \in \mc{L}}d_{\lambda}\on{Tr}\left((\wh{f_1}(\lambda) - \wh{f_2}(\lambda))(\wh{f_1}(\lambda) - \wh{f_2}(\lambda))^{\dagger}\right)\\
   &\leq n^{-2+o(1)} + \sum_{\lambda \in \mc{L}}d_{\lambda}\on{Tr}\left((\wh{f_1}(\lambda) - \wh{f_2}(\lambda))(\wh{f_1}(\lambda) - \wh{f_2}(\lambda))^{\dagger}\right);
\end{align*}
here, the third line follows from \cref{thm:charac-estim,lem:key-comp} and the final line follows from \cref{lem:removal}.

We are now left with bounding  
\begin{equation}
\label{eq:sum-L}
\sum_{\lambda \in \mc{L}}d_{\lambda}\on{Tr}\left((\wh{f_1}(\lambda) - \wh{f_2}(\lambda))(\wh{f_1}(\lambda) - \wh{f_2}(\lambda))^{\dagger}\right).
\end{equation}
By definition of $\mc{L}$, $\lambda_1 = n-x$ satisfies $x \leq (\log{n})^2$. Therefore, by the third item in \cref{lem:char-eval}, we have that 
\begin{align*}
    \wh{f_1}(\lambda) &= \bigg(\frac{1}{n} + \frac{(n-1)}{n}\cdot \frac{\chi_{\lambda}(\tau)}{d_{\lambda}}\bigg)^{t}\cdot \on{Id}_{d_{\lambda}}\\
    &= \bigg(1 - \frac{2x}{n}\cdot \frac{n-1}{n} + O(n^{-2 + o(1))}\bigg)^{t}\cdot \on{Id}_{d_{\lambda}}\\
    &= \exp\Big(\frac{-2xt}{n}\Big)\cdot (1 + O(t n^{-2+o(1)}))\cdot \on{Id}_{d_{\lambda}}.
\end{align*}

Furthermore, since
\[\mu_t \sim \sum_{\ell = 0}^{(\log n)^2} \xi_{\ell}\cdot \frac{\mb{P}[\on{Pois}(\gamma_t) = \ell]}{\mb{P}[\on{Pois}(\gamma_t) \leq (\log n)^2]},\]
where $\xi_{\ell}$ is the distribution in the statement of \cref{lem:key-comp}, 
it follows from \cref{lem:key-comp} and the second item in \cref{prop:hook-prop} that
\begin{align*}
\wh{f_2}(\lambda) &= \mb{P}[\on{Pois}(\gamma_t)\le (\log n)^2]^{-1}\cdot \sum_{\ell = 0}^{(\log n)^2}\frac{d_{\lambda^{\ast}}\binom{\ell}{x}}{d_{\lambda}}\cdot \mb{P}[\on{Pois}(\gamma_t) = \ell]\on{Id}_{d_{\lambda}}\\
&=(1+n^{-1+o(1)})\sum_{\ell = 0}^{(\log n)^2}\binom{\ell}{x} \cdot x! n^{-x} \cdot \frac{\gamma_t^{\ell}e^{-\gamma_t}}{\ell!}\on{Id}_{d_{\lambda}}\\
&= (1+n^{-1+o(1)})\sum_{\ell = x}^{(\log n)^2} n^{-x} \cdot \frac{\gamma_t^{\ell}e^{-\gamma_t}}{(\ell-x)!}\on{Id}_{d_{\lambda}}\\
&=(1+n^{-1+o(1)})n^{-x}\cdot \gamma_t^{x}\cdot \mb{P}[\on{Pois}(\gamma_t)\le (\log n)^2 - x]\on{Id}_{d_{\lambda}}\\
&= (1+n^{-1+o(1)})\exp\Big(\frac{-2xt}{n}\Big)\cdot \mb{P}[\on{Pois}(\gamma_t)\le (\log n)^2 - x]\cdot \on{Id}_{d_{\lambda}}.
\end{align*}

At this point, we are almost done. We break the sum in \cref{eq:sum-L} into two parts, depending on whether $x\ge (\log n)^2/2$ or not. 

\paragraph{\textbf{Case 1}: $x \geq (\log n)^2/2$} In this case, we use the crude bound $\wh{f_1}(\lambda) - \wh{f_2}(\lambda) = \alpha_{\lambda}\on{Id}_{d_{\lambda}}$ for $|\alpha_{\lambda}| \leq 3\exp(-2tx/n)$, along with the dimension bound $d_{\lambda} \leq n^{x}/\sqrt{x!}$ (\cref{prop:hook-prop}) to see that 
\begin{align*}
    \sum_{\lambda \in \mc{L}, x\geq (\log n)^2/2}d_{\lambda}\on{Tr}\left((\wh{f_1}(\lambda) - \wh{f_2}(\lambda))^2\right)
&\leq \sum_{x\in \mc{L}, x\geq (\log n)^2/2}d_{\lambda}^2|\alpha_{\lambda}|^2\\
&\leq 9\sum_{x = (\log n)^2/2}^{(\log n)^2}\sum_{\lambda \in \mc{L}, \lambda_1 = n-x}\exp(-4tx/n)\frac{n^{2x}}{x!}\\
&\leq  n^{o(1)} \max_{x \in [(\log n)^2/2, (\log n)^2]}\frac{\exp(-4t'x/n)}{x!} \leq n^{-\omega(1)},
\end{align*}
where in the last inequality, we used that $|t'|\leq n\log\log n/4$. 

\paragraph{\textbf{Case 2}: $x \leq (\log n)^2/2$} In this case, we have that $\mb{P}[\on{Pois}(\gamma_t)\le (\log n)^2 - x] = 1+n^{-\omega(1)}$. Therefore, $\wh{f_1}(\lambda) - \wh{f_2}(\lambda) = \alpha_\lambda \on{Id}_{d_\lambda}$ for $|\alpha_\lambda| \leq \exp(-2tx/n)\cdot n^{-1+o(1)}$. Using the same bounds on the dimension as above, we see that 
\begin{align*}
    \sum_{\lambda \in \mc{L}, x < (\log n)^2/2}d_{\lambda}\on{Tr}\left((\wh{f_1}(\lambda) - \wh{f_2}(\lambda))^2\right)
&\leq \sum_{x\in \mc{L}, x < (\log n)^2/2}d_{\lambda}^2|\alpha_{\lambda}|^2\\
&\leq n^{-2+o(1)}\sum_{x = 0}^{(\log n)^2/2}\sum_{\lambda \in \mc{L}, \lambda_1 = n-x}\exp(-4tx/n)\frac{n^{2x}}{x!}\\
&\leq  n^{-2+o(1)} \max_{x \in [0,(\log n)^2/2]}\frac{\exp(-4t'x/n)}{x!} \leq n^{-2+o(1)},
\end{align*}

For the last inequality, we used that when $x = o(\log n/\log \log n)$, then $\exp(4|t'|x/n) = n^{o(1)}$ and in the complementary range,
\[4|t'|x/n +2x \leq x\log x\]
using our bound on $|t'|$. \qedhere

\end{proof}

\subsection{Proof of \cref{lem:removal}}
\label{sec:deferred-proofs}
For completeness, we deduce \cref{lem:removal} from the estimates in \cite{DS81}.

\begin{proof}[Proof of \cref{lem:removal}]
Throughout, let $p(n)$ denote the partition number of $n$; we have via the Hardy--Ramanujan formula for partitions that $p(n)\lesssim e^{\pi\sqrt{2n/3}}$. Following \cite{DS81}, we break the sum appearing on the left hand side in the statement of \cref{lem:removal} into 5 parts: $S_1$ is the sum over $\lambda \vdash n$ satisfying $\lambda_1 \leq n/3$ and $\lambda_1' \leq n/3$; $S_2$ is the sum over $\lambda \vdash n$ satisfying ($n/3 < \lambda_1 \leq n/2$ and $\lambda_1' \leq n/2$) or ($n/3 < \lambda_1' \leq n/2$ and $\lambda_1 \leq n/2$); $S_3$ is the sum over $\lambda \vdash n$ satisfying $n/2 < \lambda_1 \leq 0.7n$ or $n/2 < \lambda_1' \leq 0.7n$; $S_4$ is the sum over $\lambda \vdash n$ satisfying $0.7n < \lambda_1 \leq n-(\log n)^2$, and finally, $S_5$ is the sum over $\lambda \vdash n$ satisfying $0.7n < \lambda_1' \leq n$. 

From \cite[Equation~3.2]{DS81}, we have $S_1 \le (1/3)^{2t}n! \leq n^{-\omega(1)}$; from \cite[Equation~3.3]{DS81}, we have $S_2 \lesssim 8^{n}(1/2)^{2t}n^{2n/3} \leq n^{-\omega(1)}$, and from \cite[Equation~3.5]{DS81}, we have $S_3 \lesssim 8^{n}(n/2)!(0.6)^{2t} \leq n^{-\omega(1)}$, where the final inequality uses the numerical inequality $\sqrt{e}\cdot 0.6 \leq 0.99$. 
 
 For the remaining summands, we have 
  \begin{align*}
        S_4 &\lesssim  \sum_{j=(\log n)^2}^{0.3n}\frac{p(j)}{j!}\exp\left(2j(j-1) \log{n}/n - 4t'(j/n -j^2/n^2 + j/n^2) \right)\\
        &\lesssim \sum_{j = (\log n)^2}^{0.3n}\frac{p(j)}{j!}\exp(2j^2\log{n}/n + 4|t'|j/n)\\
        &\lesssim \sum_{j = (\log n)^2}^{0.3n}\exp\left(j + 2j^2 \log{n}/n + j\log\log{n} - j\log{j}\right)
        \lesssim n^{-\omega(1)};
    \end{align*} 
    here, the first line follows from the proof of \cite[Equation~3.13]{DS81} (see the bottom of \cite[Page~172]{DS81}), and for the final estimate, we have used the easily verified numerical inequality: for $(\log n)^2 \leq j \leq 0.3n$ and all sufficiently large $n$
\[j + 2j^2\log{n}/n + j\log\log{n} \leq j\log{j} - j.\]
Finally, we have
 \begin{align*}
      S_5 &\lesssim e^{-4t/n}\sum_{j = 0}^{0.3n}\frac{p(j)}{j!}\exp\left(2j\log{n} - 4t(j/n - j^2/n^2)\right)\\
      &\lesssim e^{-4t/n}\sum_{j=0}^{0.3n}\frac{p(j)}{j!}\exp\left({2j^2\log{n}}/{n}\right)\exp\left(4|t'|j/n\right)\\
      &\lesssim e^{-4t/n}\sum_{j=0}^{0.3n}\exp(2j + 2j^2\log n/n - j\log j)\exp(j\log \log n)\exp(-4j\log\log\log{n})\\
      &\lesssim n^{-2 + o(1)};
 \end{align*}
 here, the first line follows from \cite[Equation~3.6]{DS81} and the first item of \cref{prop:hook-prop}, and in the final line, we have used the estimate $\exp(2j + 2j^2\log{n}/n + j\log\log n) = n^{o(1)}$ in the range $j \leq \log n/(\log \log n)^2$ and the estimate
 \[2j + 2j^2\log{n}/n + j\log\log n - j\log{j} - 4j\log\log\log{n} \leq -j\]
 in the complementary range. \qedhere
 
\end{proof}

\bibliographystyle{amsplain0.bst}
\bibliography{main.bib}

\end{document}